\newcommand{\To}{\mathbf{T}}
\newcommand{\Uo}{\mathbf{U}}
\newcommand{\Do}{\bm{\Phi}}
\newcommand{\Mo}{\mathbf{M}}
\newcommand{\Lo}{\mathbf{L}}
\newcommand{\Ao}{\mathbf{A}}
\newcommand{\Ro}{\mathbf{R}}
\newcommand{\Ho}{\mathbf{H}}
\newcommand{\Ko}{\mathbf{K}}
\newcommand{\ft}{\mathcal{F}}
\newcommand{\dis}{\mathcal{D}}
\newcommand{\reg}{\mathcal{R}}
\newcommand{\Tc}{\mathcal{T}}
\newcommand{\Lc}{\mathcal{L}}
\DeclareMathOperator*{\argmin}{arg\, min}
\DeclareMathOperator*{\argmax}{arg\, max}
\newcommand{\dom}{\operatorname{dom}}
\newcommand{\id}{\operatorname{Id}}
\newcommand{\ran}{\operatorname{ran}}
\newcommand{\im}{\operatorname{im}}
\newcommand{\hard}{\mathbf H}
\newcommand{\soft}{\mathbf S}
\newcommand{\X}{\mathbb{X}}
\newcommand{\Y}{\mathbb{Y}}
\newcommand{\N}{\mathbb{N}}
\newcommand{\R}{\mathbb{R}}
\newcommand{\Z}{\mathbb{Z}}
\newcommand{\sph}{\mathbb{S}}
\newcommand\norm[1]{\lVert#1\rVert}
\newcommand\set[1]{\{#1\}}
\newcommand\abs[1]{\lvert#1\rvert}
\newcommand\migabs[1]{\biggl\lvert#1\biggr\rvert}
\newcommand\inner[2]{\langle#1,#2\rangle}
\newcommand\miginner[2]{\biggl\langle#1,#2\biggr\rangle}
\newcommand{\la}{\lambda}
\newcommand{\ga}{\gamma}
\newcommand{\al}{\alpha}
\newcommand{\eps}{\epsilon}
\newcommand{\prox}{\operatorname{prox}}
\newcommand{\ffix}{\operatorname{fix}}
\newcommand{\fix}{\operatorname{Fix}}
\newcommand{\PnP}{\operatorname{PnP}}
\newcommand{\lip}{\operatorname{Lip}}
\newcommand{\sign}{\operatorname{sign}}
\newtheorem{maintheorem}{Theorem}
\newtheorem{theorem}{Theorem}
\newtheorem{definition}[theorem]{Definiton}
\newtheorem{corollary}[theorem]{Corollary}
\newtheorem{lemma}[theorem]{Lemma}
\newtheorem{assumption}[theorem]{Assumption}
\newtheorem{proposition}[theorem]{Proposition}
\newtheorem{remark}[theorem]{Remark}
\newtheorem{example}[theorem]{Example}
\numberwithin{equation}{section}
\numberwithin{theorem}{section}
\title{Plug-and-Play image reconstruction is a convergent regularization method}
\date{December 13, 2022}
\author{Andrea Ebner}
\affil{Department of Mathematics, University of Innsbruck\authorcr
Technikerstrasse 13, 6020 Innsbruck, Austria
 \authorcr E-mail:  \texttt{andrea.ebner@uibk.ac.at}
 }
\author{Markus Haltmeier}
\affil{Department of Mathematics, University of Innsbruck\authorcr
Technikerstrasse 13, 6020 Innsbruck, Austria
 \authorcr E-mail:  \texttt{markus.haltmeier@uibk.ac.at}
 }
\begin{document}

\maketitle

\begin{abstract}
Non-uniqueness and  instability are characteristic features of image reconstruction processes. As a result, it is necessary to develop regularization methods that can be used to compute reliable approximate solutions. A regularization method provides of a family of stable reconstructions that converge to an exact solution of the noise-free problem as the noise level tends to zero. The standard regularization technique is defined by variational image reconstruction, which minimizes a data discrepancy augmented by a regularizer. The actual numerical implementation makes use of iterative methods, often involving proximal mappings of the regularizer. In recent years, Plug-and-Play image reconstruction (PnP)  has been developed as a new powerful generalization of variational methods based on replacing proximal mappings by more general image denoisers. While PnP iterations yield excellent results, neither stability nor convergence in the sense of regularization has been studied so far. In this work, we extend the idea of PnP by considering families of PnP iterations, each being accompanied by its own denoiser.  As our main theoretical result, we show that such PnP reconstructions lead to stable and convergent regularization methods. This shows for the first time that PnP is mathematically equally justified for robust image reconstruction as variational methods.

\medskip\noindent\textbf{keywords}
Regularization, Plug-and-Play, image prior, convergence analysis, stability, inverse problems, ADMM, forward backward splitting

\end{abstract}

\section{Introduction}

Image reconstruction is an essential step for a variety of important applications including  biomedical imaging, non-destructive testing and remote sensing. These problems are often formulated as an inverse problem of the form
\begin{equation}\label{eq:ip}
	\text{Estimate $x$  from } \quad y^\delta = \Ao x + z^\delta \,.
\end{equation}
Here $x \in \X$ is the image to be recovered, $\Ao \colon \dom(\Ao) \subseteq \X \to \Y$ is an operator between Hilbert spaces modeling the  forward problem, $z^\delta$ denotes the data perturbation and $y^\delta$ are the given noisy data. Throughout we assume a deterministic error model where   we assume the estimate $\norm{z^\delta} \leq \delta$ with noise level $\delta \geq  0$. For vanishing noise level $\delta =0$ we refer to $\Ao x$ as exact data.

Most image reconstruction  problems suffer from  non-uniqueness and instability.  To overcome both issues  regularization methods have to applied. A regularization method consists of a family $(\Ro_\la)_{\la >0}$ of continuous reconstruction mappings $\Ro_\la \colon \Y \to \X$ that are convergent  in the sense that for a suitable parameter choice  $\la = \la(\delta, y^\delta)$ we have $\norm{ \Ro_{\la(\delta, y^\delta)} (y^\delta) - x} \to 0$ as $\delta \to 0$ uniformly in all $y^\delta$ with $\norm{y^\delta- \Ao x} \leq \delta$. The main goal of this work is to uncover Plug-and-Play (PnP) image reconstruction as a new member of the class of regularization methods.

\subsection{Variational regularization}

To put  our results  into perspective we start with variational image reconstruction that is probably the most established regularization concept. It defines  near-solutions $x^\delta_\al = \Ro_\la (y^\delta)$ of \eqref{eq:ip} as minimizers of the generalized Tikhonov functional
$\Tc_{y^\delta,\la} (x) \triangleq \norm{\Ao x  - y^\delta }^2/2  +  \la \reg(x)$. The data discrepancy term $\norm{\Ao x  - y^\delta }^2/2$  penalizes the distance between the predicted data $\Ao x $  and the measured data $y^\delta$, making minimizers of the Tikhonov functional being a near-solution of  \eqref{eq:ip}.  The regularizer $\reg$ on the other hand includes prior information thereby enforcing stability and regularity of the recovered image. The regularization parameter $\la >0$ allows to balance between data discrepancy and regularity and can be  selected depending on the noise level and other available prior information.
Benefits of variational regularization  are  clear interpretation and the well-developed theoretical understanding.  While classically hand-crafted regularizers  such as quadratic penalties, the $\ell^1$ norm or total variation  \cite{engl1996regularization,scherzer2009variational,acar1994analysis,daubechies2004iterative} are utilized, in recent works the use of trained neural networks has also been proposed \cite{li2020nett,obmann2021augmented,lunz2018adversarial}.

The realization of variational regularization requires iterative algorithms for  minimizing the Tikhonov functional. A particular flexible algorithm for minimizing  $\Tc_{y^\delta,\la}$ is the forward-backward splitting (FBS) iteration  $x_{\la, n+1}^{\delta} = \prox_{s \la \reg} (x_{\la, n}^{\delta}  -  s \Ao^*(\Ao x_{\la, n}^{\delta} - y^\delta) )$ where $\prox_{s \la \reg} \triangleq (\id_\X + s \la \nabla  \reg)^{-1}$  is the proximal mapping of the scaled regularizer $s \la \reg$ and $s>0$ is the step size acting as scaling parameter. In the case that $\reg$ is strongly convex and the step size is chosen properly then convergence  of  the FBS iterates towards a minimizer of   $\Tc_{y^\delta,\la}$  follows  from the Banach fixed point theorem.
Convergence of FBS iterations also holds for more general regularizers \cite{combettes2005signal,daubechies2004iterative}. Other common algorithms for minimizing the Tikhonov functioal include ADMM \cite{glowinski2014alternating} or primal dual \cite{chambolle2011first} algorithms.

\subsection{Plug-and-Play image reconstruction}

The proximal operator $\prox_{s \la \reg}$   used in iterative image reconstruction algorithms can be seen as specific denoiser defined by a regularizer. The basic idea  of Plug-and-Play (PnP) image reconstruction \cite{Ve13}, is  to select a proximal algorithm and to  replace the proximal operator with a more general denoiser  $\Do(\la,\cdot) \colon \X \to \X$.  In particular, such a strategy has been implemented for FBS  \cite{Ry19}, ADMM \cite{Sr15, Wo19}  and primal dual methods \cite{Me17}. In this paper we focus on PnP-FBS which for \eqref{eq:ip} reads
 \begin{equation} \label{eq:PnP}
    x_{\la, n+1}^\delta = \Do(\la,\cdot) \circ (x_{\la, n}^{\delta}  -  s \Ao^*(\Ao x_{\la, n}^{\delta} - y^\delta) )  \,.
\end{equation}
Clearly  FBS  is a special case by taking $\Do(\la,\cdot) =  \prox_{s\la \reg}$.
However, PnP-FBS allows to include other state-of-the-art denoisers such as BM3D \cite{dabov2007image} or  trained CNNs  \cite{xie2012image}.
PnP offers greater flexibility than the variational approach. First, a potential variational characterization of the a denoiser as proximal mapping might not be known. Second, and more importantly, any proximal  mapping is in particular the gradient of some functional which excludes any denoiser that is not of gradient form. Hence PnP significantly extends the class of variational image reconstruction.

One theoretical question in the context of PnP is convergence of the iteration \eqref{eq:PnP}.  Due to the fixed point structure,  convergence can  be derived from the large pool of existing fixed point theorems guaranteeing convergence to a fixed point of  $\Do(\la,\cdot) \circ (\id_\X  -  s \Ao^*(\Ao (\cdot) - y^\delta) )$. However, main theoretical question such as stability and convergence of fixed points as $\delta \to 0$ has not been addressed so far. This gap will be closed in this paper.

\subsection{Main results}

As one main result of this paper we show that PnP fixed points define a stable and  convergent regularization method. To the best of our knowledge,  regularization theory of PnP has not been established so far. For that  purpose  we consider an admissible family of denoisers  $(\Do(\la,\cdot))_{\la>0}$ (see Definition \ref{def:denoisers} below for precise terminology), each associated with a corresponding PnP iteration and associated fixed point
\begin{equation} \label{eq:PnP-LS}
	\PnP(\la, y^\delta) \triangleq \fix ( \Do(\la,\cdot) \circ (\id_\X  -  s \Ao^*(\Ao (\cdot) - y^\delta) ))  \,.
\end{equation}
Under conditions specified below we show  that  \eqref{eq:PnP-LS} defines  a convergent regularization method for \eqref{eq:ip}. Furthermore, we derive a characterization of the limiting problem extending the concept of $\reg$-minimizing solutions known from  variational regularization.

The following theorems are  the  main result of this  paper. In that theorems  $(\Do(\la,\cdot))_{\la >0}$ is a family  of admissible denoisers, that we rigorously define  in Section~\ref{sec:analysis}. In that section we also give more complete statements in a more general setting including the standard inverse Problem~\eqref{eq:ip} as special case.

\begin{maintheorem}[Stability] \label{thm:stability}
For all regularization parameters $\la >0$, $\PnP(\la,\cdot)$ is continuous.
\end{maintheorem}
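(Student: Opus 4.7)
The plan is to exploit the fixed-point definition of $\PnP(\la,\cdot)$ together with a standard parameter-dependent Banach fixed point argument. For fixed $\la > 0$ and $y \in \Y$, introduce the operator
\begin{equation*}
T_y(x) \triangleq \Do(\la,\, x - s\Ao^*(\Ao x - y)),
\end{equation*}
so that $\PnP(\la,y)$ is by definition a fixed point of $T_y$. From admissibility of the denoiser family (Definition~\ref{def:denoisers}) I would extract two quantitative ingredients: (i) $\Do(\la,\cdot)$ is Lipschitz continuous with some constant $L_\la$, and (ii) together with a suitable step-size condition on $s$, the map $T_y$ is a strict contraction with modulus $q_\la < 1$ that is independent of $y$. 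Both are standard in the PnP literature and are precisely what makes the fixed-point definition of $\PnP(\la,\cdot)$ meaningful and single-valued for every $y$ in the first place.

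Given these two properties, continuity follows by a textbook perturbation estimate. For $y, \tilde y \in \Y$, set $x = \PnP(\la,y)$ and $\tilde x = \PnP(\la,\tilde y)$. Using $x = T_y(x)$, $\tilde x = T_{\tilde y}(\tilde x)$, the triangle inequality, and the contractivity of $T_y$,
\begin{equation*}
\norm{x - \tilde x} \leq \norm{T_y(x) - T_y(\tilde x)} + \norm{T_y(\tilde x) - T_{\tilde y}(\tilde x)} \leq q_\la \norm{x - \tilde x} + \norm{T_y(\tilde x) - T_{\tilde y}(\tilde x)}.
\end{equation*}
The residual term only reflects the change in the data-fidelity step, so Lipschitz continuity of $\Do(\la,\cdot)$ yields
\begin{equation*}
\norm{T_y(\tilde x) - T_{\tilde y}(\tilde x)} \leq L_\la\, s\, \norm{\Ao^*(\tilde y - y)} \leq L_\la\, s\, \norm{\Ao}\, \norm{\tilde y - y}.
\end{equation*}
Rearranging gives $\norm{\PnP(\la,y) - \PnP(\la,\tilde y)} \leq \tfrac{L_\la s \norm{\Ao}}{1-q_\la}\, \norm{y-\tilde y}$, so in fact $\PnP(\la,\cdot)$ is Lipschitz, hence continuous.

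The main technical obstacle lies in step (i)--(ii): one must verify that the precise notion of admissibility formalized in Section~\ref{sec:analysis} really supplies a uniform contraction modulus for the full operator $T_y$, and not merely nonexpansiveness of the denoiser. If admissibility only ensures firm nonexpansiveness or an averaging property of $\Do(\la,\cdot)$, the contraction estimate above does not close directly, and one would have to replace the elementary triangle-inequality step by a quantitative stability result for fixed points of averaged operators, for instance using demiclosedness together with the specific affine structure of the perturbation $T_y - T_{\tilde y}$ in the variable $y$. I expect this tension between contractivity and nonexpansiveness to be the decisive point, and to dictate the precise form of admissibility that must be adopted in Definition~\ref{def:denoisers} for Theorem~\ref{thm:stability} to hold as stated.
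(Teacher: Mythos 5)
Your proposal is correct and follows essentially the same route as the paper: Theorem~\ref{thm:stabest} proves exactly this fixed-point perturbation estimate (triangle inequality plus contractivity of the composed operator), and Corollary~\ref{cor:stab} then gives continuity, with the least-squares case yielding precisely your Lipschitz bound with constant $s\norm{\Ao}\lip(\Do(\la,\cdot))/(1-\lip(\Do(\la,\cdot)))$. The tension you flag is resolved in the direction you expect: admissibility condition \ref{ass:B1} demands that $\Do(\la,\cdot)$ be a strict contraction (merely nonexpansive denoisers are rescaled via Lemma~\ref{lem:scaled}), and Lemma~\ref{lem:grad-step} (Baillon--Haddad) makes $\id_\X - s\nabla_x\dis(\cdot,y)$ nonexpansive for $s\in(0,2\beta)$, so the full operator has modulus $q_\la=\lip(\Do(\la,\cdot))<1$ independent of $y$.
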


\begin{proof}
See Section \ref{sec:stability}.
\end{proof}

\begin{maintheorem}[Convergence] \label{thm:convergence}
For any $y \in \ran(\Ao)$ and any sequence $(\delta_k)_{k \in \N} \in (0,\infty)^\N$  of noise levels converging to $0$ there  exists a sequence $(\la_k)_{k \in \N} \in (0,\infty)^\N$  of regularization parameters converging to $0$  such that for all $y_k \in \Y$ with $\norm{y-y_k} \leq \delta_k$ the following hold:
\begin{enumerate}[label=(\alph*)]
	\item $(\PnP(\la_k,y_k))_{k\in \N}$ has a weakly convergent subsequence.
	\item  The limit of every weakly convergent subsequence of $(\PnP(\la_k,y_k))_{k\in \N}$ is a solution of \eqref{eq:ip}.
\end{enumerate}
\end{maintheorem}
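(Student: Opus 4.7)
The plan is to follow the standard template for proving convergence of a regularization method: extract from each fixed point $x_k \triangleq \PnP(\la_k,y_k)$ a Tikhonov-style energy inequality against a reference solution, use it to control both data discrepancy and prior energy, and then pass to the weak limit. Fix some $x^\dagger \in \X$ with $\Ao x^\dagger = y$ (which exists because $y \in \ran(\Ao)$), and set $T_{\la,y} \triangleq \Do(\la,\cdot)\circ(\id_\X - s\Ao^*(\Ao(\cdot)-y))$, so that $x_k = T_{\la_k,y_k}(x_k)$. As parameter choice I would take $(\la_k)$ with $\la_k \to 0$ and $\delta_k^2/\la_k \to 0$, the classical a priori rule familiar from variational regularization.

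The core of the proof is an energy estimate. From the admissibility of $(\Do(\la,\cdot))_{\la>0}$, whose precise form is given in Section~\ref{sec:analysis}, I expect to extract an inequality of Tikhonov type,
\begin{equation*}
\tfrac{1}{2}\norm{\Ao x_k - y_k}^2 + \la_k \reg(x_k) \;\leq\; \tfrac{1}{2}\norm{\Ao x^\dagger - y_k}^2 + \la_k \reg(x^\dagger),
\end{equation*}
where $\reg$ is an implicit prior attached to the denoiser family (which reduces to the usual Tikhonov optimality whenever $\Do(\la,\cdot) = \prox_{s\la\reg}$). Using $\norm{\Ao x^\dagger - y_k} \leq \delta_k$ this gives $\norm{\Ao x_k - y_k}^2 \leq \delta_k^2 + 2\la_k \reg(x^\dagger)$ as well as $\reg(x_k) \leq \reg(x^\dagger) + \delta_k^2/(2\la_k)$, so under the chosen parameter rule both $\Ao x_k \to y$ in norm and $(\reg(x_k))_{k\in\N}$ stays bounded.

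Granting that $\reg$ is coercive, or at least has weakly sequentially precompact sublevel sets — a property I expect to be part of admissibility — the boundedness of $\reg(x_k)$ forces $(x_k)$ to be bounded in the Hilbert space $\X$. Banach–Alaoglu then yields a weakly convergent subsequence $x_{k_j} \rightharpoonup \bar x$, proving (a). For (b), weak continuity of the bounded linear operator $\Ao$ gives $\Ao x_{k_j} \rightharpoonup \Ao \bar x$, while from the data-discrepancy bound we already have $\Ao x_{k_j} \to y$ strongly; comparing limits yields $\Ao \bar x = y$, so $\bar x$ solves the noise-free problem~\eqref{eq:ip}.

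The main obstacle is the energy inequality, because PnP fixed points lack an a priori variational characterization. One has to extract the Tikhonov-like bound from the fixed-point identity $x_k = \Do(\la_k, x_k - s\Ao^*(\Ao x_k - y_k))$ using only the axioms of admissible denoisers — presumably an averaged or firmly nonexpansive structure together with a quantitative description of $\Do(\la,\cdot)$ as $\la \to 0$ — rather than from minimization of a functional. Once such a substitute for Tikhonov optimality is in place, the rest of the argument proceeds by the standard convergent regularization recipe as sketched above.
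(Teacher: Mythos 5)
There is a genuine gap, and it sits exactly where you flag it: the Tikhonov-type energy inequality you postulate does not exist in the setting of this paper, and it cannot be ``extracted from the axioms of admissible denoisers.'' The admissibility conditions \ref{ass:B1}--\ref{ass:B4} in Definition~\ref{def:denoisers} are purely operator-theoretic (contractivity, pointwise strong convergence to $\id_\X$, weak uniform convergence to $\id_\X$ on bounded sets, and the quantitative rate $\norm{\Do(\la,x)-x}=\mathcal O(1-\lip(\Do(\la,\cdot)))$ on a set $E$); they attach no functional $\reg$ to the denoiser family, and the paper's motivating examples are precisely denoisers that are \emph{not} proximal mappings of any convex functional (by Moreau's theorem, e.g.\ the non-self-adjoint linear filters and the causal convolution of Example~\ref{ex:causal}). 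For such $\Do(\la,\cdot)$ there is no implicit prior, no minimized Tikhonov functional, hence no inequality $\tfrac12\norm{\Ao x_k-y_k}^2+\la_k\reg(x_k)\le\tfrac12\norm{\Ao x^\dagger-y_k}^2+\la_k\reg(x^\dagger)$, and consequently neither your bound $\norm{\Ao x_k-y_k}^2\le\delta_k^2+2\la_k\reg(x^\dagger)$ nor the coercivity-based boundedness of $(x_k)$ can be obtained. Your parameter rule $\delta_k^2/\la_k\to0$ is likewise tied to that missing variational structure; nothing in \ref{ass:B1}--\ref{ass:B4} justifies it.

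The paper's proof replaces the energy argument by two operator-theoretic steps, and this is the idea your proposal is missing. First, boundedness of the fixed points $x_k=\PnP(\la_k,y_k)$ is obtained from Lemma~\ref{lem:bound-fix}: one evaluates the perturbed fixed-point map $\To_k=\Do(\la_k,\cdot)\circ(\id_\X-s\nabla_x\dis(\cdot,y_k))$ at an exact solution $x^*$ and shows $\norm{\To_k(x^*)-x^*}/(1-\lip(\To_k))$ stays bounded; this requires the rate condition \ref{ass:B4} \emph{and} a parameter choice coupling the contraction factor to the data perturbation, namely $\lip(\Do(\la_k,\cdot))\le M/(M+\eta_k)$ with $\eta_k=\sup_x\norm{\nabla_x\dis(x,y_k)-\nabla_x\dis(x,y)}$ (for least squares, $\eta_k\le\norm{\Ao}\delta_k$) --- a rule quite different from $\delta_k^2/\la_k\to0$. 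Second, identification of weak limits does not come from strong convergence of $\Ao x_k$ (which is unavailable without the energy bound) but from Lemma~\ref{lem:weak-conv}: using \ref{ass:B3} one shows $\To_k\to\id_\X-s\nabla_x\dis(\cdot,y)$ weakly uniformly on the bounded set containing the $x_k$, so any weak accumulation point is a fixed point of the limit map and hence, by Proposition~\ref{prop:exact}, a solution. Your closing paragraph correctly identifies that a ``substitute for Tikhonov optimality'' is needed, but leaving that substitute unspecified is leaving out the entire proof; and the substitute the paper actually uses is not an energy inequality at all.
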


\begin{proof}
See Section \ref{sec:convergence}.
\end{proof}

Theorems  \ref{thm:stability} and \ref{thm:convergence}  imply that $(\PnP(\la,\cdot))_{\la >0} $  is a regularization method for  \eqref{eq:ip}. Actually these results are derived in a more general framework (introduced in Subsection~\ref{ssec:illposed}) that includes  \eqref{eq:ip} as a special  case.

\begin{maintheorem}[PnP limits]\label{thm:char}
In the situation of Theorem \ref{thm:convergence} suppose $ \Do(\la,\cdot)^{-1}$ is single valued and that $( \Do(\la,\cdot)^{-1}-\id)/\al$  converges weakly uniformly on bounded sets to a weakly continuous $\Ho \colon \X \to \X$. Then any weak accumulation point $x^*$ of $(\PnP(\la_k,y_k))_{k\in \N}$  satisfies
$\Ao (x^*) = y $ and $\Ho(x^*)  \in  \ker(\Ao)^\perp$.
\end{maintheorem}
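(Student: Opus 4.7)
The plan is to convert the PnP fixed-point equation into an Euler--Lagrange-type identity, pass to the weak limit along a subsequence, and then use weak-closedness of $\ker(\Ao)^\perp$ to locate $\Ho(x^*)$.

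Write $x_k \triangleq \PnP(\la_k,y_k)$ and, after passing to a subsequence provided by Theorem~\ref{thm:convergence}, assume $x_k \rightharpoonup x^*$. Since $x_k$ is a fixed point of $\Do(\la_k,\cdot)\circ(\id_\X - s\Ao^*(\Ao(\cdot)-y_k))$ and $\Do(\la_k,\cdot)^{-1}$ is single-valued, applying $\Do(\la_k,\cdot)^{-1}$ on both sides yields
\[
  \Do(\la_k,\cdot)^{-1}(x_k) - x_k \;=\; -\,s\,\Ao^*(\Ao x_k - y_k).
\]
Dividing by the scaling $\al_k$ associated with $\la_k$ produces an identity whose right-hand side always belongs to $\ran(\Ao^*)\subseteq \ker(\Ao)^\perp$. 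Since $\ker(\Ao)^\perp$ is a closed subspace and therefore weakly closed, it suffices to identify the weak limit of the left-hand side as $\Ho(x^*)$ to obtain $\Ho(x^*)\in\ker(\Ao)^\perp$.

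For the limit identification I would use the triangle-type decomposition
\[
  \frac{\Do(\la_k,\cdot)^{-1}(x_k) - x_k}{\al_k} \;=\; \Bigl[\frac{\Do(\la_k,\cdot)^{-1}(x_k) - x_k}{\al_k} - \Ho(x_k)\Bigr] + \Ho(x_k).
\]
The weakly convergent subsequence $(x_k)$ is bounded, so the bracketed term tends to $0$ weakly by the weak-uniform-on-bounded-sets hypothesis; meanwhile, the assumed weak continuity of $\Ho$ gives $\Ho(x_k)\rightharpoonup \Ho(x^*)$. Passing to the weak limit in the displayed identity yields $\Ho(x^*)\in\ker(\Ao)^\perp$. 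The remaining equation $\Ao(x^*)=y$ is a direct consequence of Theorem~\ref{thm:convergence}(b), which already asserts that every weak accumulation point is a solution of the noise-free problem $\Ao x=y$.

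The main obstacle is the decoupling step: weak convergence of the arguments $x_k$ must be combined with operator-level convergence of $(\Do(\la_k,\cdot)^{-1}-\id)/\al_k$ in a way that still yields weak convergence of the composition along the diagonal $k\to\infty$. The ``weakly uniformly on bounded sets'' assumption is precisely what enables this decoupling by absorbing the dependence on $k$ entering through both $\la_k$ and $x_k$ simultaneously, while the weak continuity of $\Ho$ is needed to convert $x_k\rightharpoonup x^*$ into $\Ho(x_k)\rightharpoonup \Ho(x^*)$.
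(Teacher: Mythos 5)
Your proposal is correct and follows essentially the same route as the paper's proof of Theorem~\ref{thm:char2}: apply $\Do(\la_k,\cdot)^{-1}$ to the fixed-point equation to get $\Do(\la_k,\cdot)^{-1}(x_k)-x_k=-s\Ao^*(\Ao x_k-y_k)\in\ker(\Ao)^\perp$, divide by the parameter, and identify the weak limit as $\Ho(x^*)$ via the same split into the weak-uniform-convergence term on a bounded set containing $(x_k)$ plus the weak-continuity term $\Ho(x_k)\rightharpoonup\Ho(x^*)$. The only cosmetic difference is that you conclude via weak closedness of the subspace $\ker(\Ao)^\perp$, whereas the paper tests directly against $z\in\ker(\Ao)$ to get $\inner{\Ho(x^*)}{z}=0$ --- the same duality argument.
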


\begin{proof}
See Section \ref{sec:char}.
\end{proof}

\subsection{Notation}

Operator  $\To \colon \X \to \X$ is called $L$-Lipschitz continuous with  $L > 0$  if $\norm{\To(x_1)-\To(x_2)} \leq L \norm{x_1-x_2}$ for all $x_1, x_2 \in \X$.  By $\lip(\To)$ we denote the smallest possible Lipschitz constant of $\To$. If $\lip(\To)\leq 1$  then $\To$ is called non-expansive and if $\lip(\To)< 1$  then it is called contraction.  Finally, $\To$ is called firmly non-expansive if $\norm{\To(x_1)-\To(x_2)}^2
 \leq \inner{x_1-x_2}{\To(x_1)-\To(x_2)}$ for all $x_1,x_2\in \X$.
Recall $x \in \X$ is called a fixed point of $\To$ if $\To(x) = x$. We write  $\fix(\To) = \{x \in \X \mid \To(x) = x\}$ for the set of all fixed points. If  $\fix(\To)$ consists of a single element we denote this  element by $\ffix (\To)$.

A family $(\To_\la)_{\la > 0}$  of mappings on $\X$ is said to pointwise strongly convergence to $\To$  if  $\lim_{\la \to  0} \norm{\To_\la(x) - \To(x)} = 0$ for all $x \in \X$.
It is said to converge  weakly uniformly on some set $B \subseteq \X$  if $\forall z\colon \sup_{x \in B} \inner{\To_\la(x)-\To(x)}{z} \to 0$.

Let  $\Gamma_0(\X) $ denote the set  of all $\reg \colon \X \to [0, \infty]$ that are proper, convex and lower semicontinuous.  For $\reg \in \Gamma_0(\X) $ the proximal mapping  $\prox_{\reg} \colon  \X \to \X$ is
uniquely defined by $\prox_{\reg} (x) \coloneqq  \argmin_{z \in \X}  \norm{x-z}^2/2 + \reg(z) $ and firmly nonexpansive \cite[Prop. 12.28]{bauschke2011convex}.

\section{Background}

Throughout this paper, $\X$ and $\Y$ denote possible infinite dimensional Hilbert spaces equipped with the inner product  $\inner{\cdot}{\cdot}$ and induced norm topology.

\subsection{Ill-posed minimization problems}
\label{ssec:illposed}

In the exact data situation, solving \eqref{eq:ip} reduces to the solution of the equation $\Ao x = y$. Equivalently this amounts minimizing the least squares functional $ \norm{\Ao x- y}^2/2$. Throughout this paper we consider the more general situation where we look for minimizers of some discrepancy  functional $ \dis \colon \Y \times \X \to [0, \infty)$ with respect to the variable $x$. We thus consider
\begin{equation}\label{eq:min}
	\text{Estimate $\argmin_x \dis(\cdot,y)$  from  $y^\delta = y + z^\delta$} \,.
\end{equation}
Here $y, y^\delta \in \Y$  are exact and noisy data, respectively, and  we assume the known  noise bound $\norm{z^\delta}   \leq \delta$. With the particular choice $\dis(x,y) =   \norm{\Ao x -y}^2/2$, Problem  \eqref{eq:min} reduces to the standard image reconstruction Problem \eqref{eq:ip}. One may take $\dis(x,y) = \Lc (\Ao x, y)$ for a loss function $\Lc$. However, our framework also includes cases where no forward operator $\Ao$ can be identified or where $\Ao$ is defined implicitly via minimization.

Throughout the paper we will make the  following assumptions on the discrepancy functional.

\begin{assumption}[Assumptions on $\dis$]\label{ass:A}\mbox{}
\begin{enumerate}[label=(A\arabic*), leftmargin=2.5em]
\item \label{ass:A1} $\dis$ is convex and Fr\'echet differentiable in $x$.
\item \label{ass:A2} $\nabla_x \dis$ is $1/\beta$-Lipschitz continuous in $x$ with $\beta > 0$.
\item \label{ass:A3} $\nabla_x \dis$ is weakly sequentially continuous  in $x$
\item \label{ass:A4} $(\nabla_x \dis (x,\cdot))_{x \in \X}$ is equicontinuous.
\end{enumerate}
\end{assumption}

Recall that $(\nabla_x \dis (x,\cdot))_{x \in \X}$ is  equicontinuous if $\norm{y_k - y} \to 0$ implies $ \sup_{x \in \X} \norm{\nabla_x \dis (x,y_k) - \nabla_x \dis (x,y)} \to  0$  .

\begin{example}[Least squares]
Let us verify that \ref{ass:A1}-\ref{ass:A4} are indeed satisfied for the standard inverse Problem \eqref{eq:ip} with bounded linear $\Ao \colon \X \to \Y$ and associated least squares  functional $\dis_{\rm LS}(x,y) \triangleq \norm{\Ao x -y }^2 / 2$.
Clearly $\dis_{\rm LS}$ is  convex and Fr\'echet differentiable in  $x$ with gradient $\nabla_x \dis_{\rm LS}(x,y) = \Ao^* (\Ao x -  y)$. Further, $\nabla_x \dis_{\rm LS}(\cdot,y) $ is $\norm{\Ao}^2$-Lipschitz continuous and weakly continuous. Finally, the estimate  $\norm{\nabla_x \dis_{\rm LS}(x,y_1) - \nabla_x \dis_{\rm LS}(x,y_2)}  \leq \norm{\Ao} \norm{y_1-y_2} $ shows that  $(\nabla_x \dis_{\rm LS} (x,\cdot))_{x \in \X}$  is equicontinuous.
\end{example}

Under \ref{ass:A1} the following fixed point characterization holds.

\begin{proposition}[Exact data] \label{prop:exact}
Let \ref{ass:A1} be satisfied. For all $(x^0,y^0) \in \X \times \Y$ and $s>0$ the following are equivalent:
\begin{enumerate}[label=(\roman*)]
\item\label{lem:exact1}  $x^0 \in \argmin \dis(\cdot,y^0)$.
\item\label{lem:exact2}  $\nabla_x \dis (x^0,y^0) = 0$.
\item\label{lem:exact3} $x^0 \in \fix(\id_\X - s \nabla_x \dis (\cdot,y^0))$
\end{enumerate}
If these conditions are satisfied for at least one $x^0$ then we refer to $y^0$ as exact data.
\end{proposition}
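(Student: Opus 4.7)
The plan is to establish the cyclic chain of implications (i) $\Rightarrow$ (ii) $\Rightarrow$ (iii) $\Rightarrow$ (i), with all three equivalences being essentially textbook consequences of the convex and differentiable structure provided by assumption \ref{ass:A1}. Since \ref{ass:A1} says $\dis(\cdot, y^0)$ is convex and Fréchet differentiable, this is nothing more than Fermat's rule combined with the algebraic identity defining the forward-step operator, so I do not expect any genuine obstacle; the only care needed is in citing the right convex-analysis fact.

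First, for (i) $\Rightarrow$ (ii) I would invoke Fermat's rule: if $x^0$ minimizes the Fréchet differentiable function $\dis(\cdot, y^0)$, then the directional derivative vanishes in every direction, hence $\nabla_x \dis(x^0, y^0) = 0$. Next, for (ii) $\Rightarrow$ (i) I would use the standard subgradient inequality for convex Fréchet differentiable functions, namely
\begin{equation*}
    \dis(x, y^0) \geq \dis(x^0, y^0) + \inner{\nabla_x \dis(x^0, y^0)}{x - x^0} \quad \text{for all } x \in \X.
\end{equation*}
Substituting $\nabla_x \dis(x^0, y^0) = 0$ immediately yields $\dis(x, y^0) \geq \dis(x^0, y^0)$ for every $x$, giving $x^0 \in \argmin \dis(\cdot, y^0)$.

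Finally, the equivalence (ii) $\Leftrightarrow$ (iii) is a purely algebraic observation: the condition $x^0 \in \fix(\id_\X - s \nabla_x \dis(\cdot, y^0))$ means exactly $x^0 = x^0 - s \nabla_x \dis(x^0, y^0)$, which rearranges to $s \nabla_x \dis(x^0, y^0) = 0$. Since $s > 0$, this is equivalent to $\nabla_x \dis(x^0, y^0) = 0$. Closing the cycle, the three conditions are mutually equivalent. The harmless-but-worth-mentioning subtlety is that assumptions \ref{ass:A2}--\ref{ass:A4} play no role here; only convexity and Fréchet differentiability in $x$ from \ref{ass:A1} are used, which makes the proposition a clean foundational lemma to which the fixed-point machinery of later sections can then be applied.
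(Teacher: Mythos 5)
Your proof is correct and follows essentially the same route as the paper: the equivalence of (i) and (ii) via Fermat's rule for convex, Fr\'echet differentiable functions (you merely unpack the biconditional into the necessity from differentiability and the sufficiency from the convexity gradient inequality, where the paper cites it as a single fact), and the equivalence of (ii) and (iii) by the same elementary rearrangement using $s>0$. Your closing remark that only \ref{ass:A1} is needed matches the paper's hypotheses exactly.
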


\begin{proof}
Since is $\dis(\cdot,y)$ is convex and Fr\'echet differentiable, by Fermat's rule \cite[Prop. 17.4]{bauschke2011convex}, $x^0$ is a minimizer of $\nabla_x\dis(\cdot,y^0)$ if and only if the first order optimality condition $\nabla_x \dis(x^0,y^0) = 0$ holds which is the equivalence of \ref{lem:exact1} and  \ref{lem:exact2}. By elementary reformulation, the optimality condition is equivalent to $x^0 = x^0 - s \nabla_x \dis(x^0,y^0)$ which   is  \ref{lem:exact3}.
\end{proof}

\begin{lemma}\label{lem:grad-step}
If \ref{ass:A1}, \ref{ass:A2} hold  then  $\id_\X - s \nabla_x \dis(\cdot,y)$ is non-expansive for all $y \in \Y$ and $s \in (0,2\beta)$.
\end{lemma}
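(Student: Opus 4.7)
The plan is to apply the Baillon--Haddad theorem to convert the Lipschitz gradient assumption \ref{ass:A2} into a cocoercivity estimate for $\nabla_x \dis(\cdot,y)$, then expand the norm square of the gradient step and use this cocoercivity to kill the quadratic term when $s \in (0,2\beta)$.

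More precisely, fix $y \in \Y$ and write $f \triangleq \dis(\cdot,y)$. By \ref{ass:A1} the function $f$ is convex and Fr\'echet differentiable, and by \ref{ass:A2} its gradient is $(1/\beta)$-Lipschitz continuous. The Baillon--Haddad theorem (see e.g.\ \cite[Cor.~18.16]{bauschke2011convex}) then asserts that $\nabla f$ is $\beta$-cocoercive, that is
\begin{equation*}
\inner{\nabla f(x_1) - \nabla f(x_2)}{x_1 - x_2} \;\geq\; \beta \, \norm{\nabla f(x_1) - \nabla f(x_2)}^2
\qquad \text{for all } x_1,x_2 \in \X.
\end{equation*}

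With $T \triangleq \id_\X - s\nabla f$, I would then expand directly:
\begin{align*}
\norm{T(x_1) - T(x_2)}^2
&= \norm{x_1 - x_2}^2 - 2s\,\inner{x_1-x_2}{\nabla f(x_1) - \nabla f(x_2)} + s^2 \norm{\nabla f(x_1) - \nabla f(x_2)}^2 \\
&\leq \norm{x_1 - x_2}^2 + s(s - 2\beta)\, \norm{\nabla f(x_1) - \nabla f(x_2)}^2.
\end{align*}
For $s \in (0,2\beta)$ the factor $s(s-2\beta)$ is non-positive, so the right-hand side is bounded above by $\norm{x_1-x_2}^2$. This gives non-expansiveness.

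There is no real obstacle here; the only substantive input is the Baillon--Haddad theorem, which is exactly the tool that bridges ``Lipschitz gradient of a convex function'' and ``cocoercive operator.'' The result fails without convexity (one would only get non-expansiveness of the gradient step for $s \in (0,\beta)$ by the Lipschitz estimate alone), so it is essential that \ref{ass:A1} supplies convexity in $x$.
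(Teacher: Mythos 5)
Your proof is correct and follows essentially the same route as the paper: both invoke the Baillon--Haddad theorem (firm non-expansiveness of $\beta\nabla_x\dis(\cdot,y)$, i.e.\ $\beta$-cocoercivity) and then expand $\norm{T(x_1)-T(x_2)}^2$, the only cosmetic difference being that you use cocoercivity to absorb the cross term into $s(s-2\beta)\norm{\nabla f(x_1)-\nabla f(x_2)}^2$ while the paper bounds the quadratic term by the inner product and uses $s^2\leq 2\beta s$. (Your closing aside that Lipschitz continuity alone would give non-expansiveness for $s\in(0,\beta)$ is not accurate---without monotonicity one gets only $\lip(\id_\X - s\nabla f)\leq 1+s/\beta$---but this remark plays no role in the argument.)
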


\begin{proof}
Since $\dis(\cdot,y)$ is convex, Fr\'echet differentiable and $\nabla_x \dis(\cdot,y)$ is $1/\beta$-Lipschitz, by the Baillon-Haddad theorem \cite{bauschke2010baillon,Ba77}, the operator $\beta \nabla_x \dis(\cdot,y)$ is firmly non-expansive.  Thus for  all $x_1, x_2 \in \X$ we have
\begin{align*}
&\norm{ (x_1-s \nabla_x \dis(x_1,y)  - (x_2-s \nabla_x \dis(x_2,y)}^2 \\
&= \norm{x_1-x_2}^2 + s^2 \norm{\nabla_x \dis(x_1,y) - \nabla_x \dis(x_2,y)}^2 \\
&\qquad - 2 s \inner{x_1-x_2}{\nabla_x \dis(x_1,y) - \nabla_x \dis(x_2,y)} \\
&\leq \norm{x_1-x_2}^2 + s^2 / \beta  \inner{x_1-x_2}{\nabla_x \dis(x_1,y)-\nabla_x \dis(x_2,y)}
\\ &\qquad- 2 s \inner{x_1-x_2}{\nabla_x \dis(x_1,y) - \nabla_x \dis(x_2,y)} \\
&\leq \norm{x_1-x_2}^2 \,,
\end{align*}
where we used that  $s^2 \leq 2 \beta s$.
\end{proof}

Image reconstruction problems are commonly ill-posed, which means that minimizers of $\dis(\cdot,y^\delta)$ do not  exist, are not unique, or depend unstably on data $y^\delta$.
In order to account for the ill-posedness  one has  to apply regularization methods \cite{scherzer2009variational}. In this paper we use the following notion of a weakly convergent regularization method.

\begin{definition}[Regularization method] \label{def:reg}
A family $(\Ro_\la)_{\la >0}$ of mappings $\Ro_\la \colon \Y \to \X$ together with a parameter choice rule $\la^* \colon (0, \infty) \to (0, \infty)$ where $\lim_{\delta \to 0}\la^*(\delta)=0$ is called regularization method for $\argmin_x \dis(x,y)$  over  $E \subseteq \Y$ if the following holds:
\begin{enumerate}[label=(\alph*)]
\item Stability:  $ \forall \la >0 \colon \Ro_\la$ is continuous.
\item  Convergence: For all $y \in E$,  $(\delta_k)_{k \in \N} \in (0,\infty)^\N$  converging to $0$  and all $y_k \in \Y$ with $\norm{y-y_k} \leq \delta_k$, the   sequence  $(\PnP(\la^*(\delta_k),y_k))_{k\in \N}$ has a weakly convergent subsequence and the limit of every weakly convergent subsequence of $(\PnP(\la (\delta_k),y_k))_{k\in \N}$ is a minimizer of  $\dis(\cdot,y)$.
\end{enumerate}
\end{definition}

The aim of this paper is to uncover PnP as a regularization method for $\argmin_x \dis(x,y)$.

\subsection{The PnP method}

Variational image reconstruction is probably the most established  regularization method, at least for the standard inverse Problem \eqref{eq:ip}. Based on  a regularizer $\reg \colon \X \to [0, \infty)$, $\Ro_\la(y^\delta)$ for the more general Problem \eqref{eq:min}, it is defined  as a minimizer of the Tikhonov functional $\dis(\cdot,y^\delta) + \la \reg$. The following  characterization of variational regularization generalizing Proposition~\ref{prop:exact} will serve as starting point of PnP.

\begin{lemma}[Variational\label{lem:noisy} reconstruction]
Let \ref{ass:A1} be satisfies and $\reg\in \Gamma_0(\X)$, then for all $(x_\la^\delta,y^\delta) \in \X \times \Y$ and $s,\la>0$ the following are equivalent:
\begin{enumerate}[label=(\roman*)]
\item\label{lem:noisy1}  $x_\la^\delta  \in \argmin  \dis(\cdot,y^\delta)  +  \la \reg$.
\item\label{lem:noisy2}  $ \nabla_x \dis(x_\la^\delta,y^\delta)  +  \la \nabla_x \reg(x_\la^\delta)  = 0$.
\item\label{lem:noisy3} $x_\la^\delta \in \fix(  \prox_{s \la \reg} (\id_\X - s \nabla_x  \dis(\cdot, y^\delta))$.
\end{enumerate}
\end{lemma}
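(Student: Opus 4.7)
The plan is to mirror the proof of Proposition~\ref{prop:exact} but account for the non-smoothness of $\reg$ by using subdifferentials throughout. The two ingredients I will need are Fermat's rule for convex functions and the subdifferential characterization of the proximal mapping, both standard in \cite{bauschke2011convex}. (As a preliminary notational remark, since a general $\reg \in \Gamma_0(\X)$ need not be Fréchet differentiable, the object written $\nabla_x \reg$ in (ii) should be read as the subdifferential $\partial \reg$; with that reading (ii) becomes the inclusion $0 \in \nabla_x \dis(x_\la^\delta,y^\delta) + \la \partial \reg(x_\la^\delta)$.)

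For \ref{lem:noisy1} $\Leftrightarrow$ \ref{lem:noisy2}, I would observe that under \ref{ass:A1} the map $\dis(\cdot, y^\delta)$ is convex and finite-valued, and $\la \reg \in \Gamma_0(\X)$, so the sum $\dis(\cdot,y^\delta) + \la \reg$ lies in $\Gamma_0(\X)$. Because one summand is everywhere finite and continuous, the subdifferential sum rule applies without further qualification, giving $\partial(\dis(\cdot,y^\delta) + \la \reg)(x) = \nabla_x \dis(x,y^\delta) + \la \partial \reg(x)$. Fermat's rule then identifies the minimizers as precisely the zeros of this subdifferential, which is \ref{lem:noisy2}.

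For \ref{lem:noisy2} $\Leftrightarrow$ \ref{lem:noisy3}, I would invoke the characterization $u = \prox_{s\la \reg}(v) \Leftrightarrow v - u \in s\la\, \partial \reg(u)$. Setting $u = x_\la^\delta$ and $v = x_\la^\delta - s\nabla_x \dis(x_\la^\delta, y^\delta)$, the fixed-point condition in \ref{lem:noisy3} translates to $-s \nabla_x \dis(x_\la^\delta, y^\delta) \in s\la\, \partial \reg(x_\la^\delta)$, and dividing by $s > 0$ yields exactly the inclusion in \ref{lem:noisy2}. All implications in this chain are equivalences, closing the loop.

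No step looks genuinely hard here; the argument is a routine adaptation of the smooth case treated in Proposition~\ref{prop:exact}. The one point to be careful about is the notational one already flagged: the three conditions are equivalent as written only once $\nabla_x \reg$ is interpreted as $\partial \reg$. Everything else is a direct appeal to convex-analytic facts in \cite{bauschke2011convex}.
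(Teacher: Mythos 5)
Your proof is correct and takes essentially the same route as the paper's: Fermat's rule gives (i) $\Leftrightarrow$ (ii), and the identification of $\prox_{s\la\reg}$ with the resolvent $(\id_\X + s\la\,\partial\reg)^{-1}$ gives (ii) $\Leftrightarrow$ (iii). Your subdifferential reading of $\nabla_x\reg$ is in fact slightly more careful than the paper's own argument, which manipulates $\nabla_x\reg$ formally even though a general $\reg\in\Gamma_0(\X)$ need not be differentiable.
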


\begin{proof}
By Fermat's rule,  $x_\la^\delta$ minimizes $\dis(\cdot,y^\delta) + \la \reg$ if and only if  $ \nabla_x\dis(x_\la^\delta, y^\delta)  +  \la \nabla_x\reg (x_\la^\delta) = 0$, which shows  \ref{lem:noisy1} $\Leftrightarrow$  \ref{lem:noisy2}. Moreover,
\begin{align*}
& 0 =  \nabla_x\dis(x_\la^\delta,y^\delta)  +  \la \nabla_x\reg (x_\la^\delta) \notag \\
& \Leftrightarrow  x_\la^\delta + s \la \nabla_x \reg (x_\la^\delta) = x_\la^\delta - s \nabla_x \dis(x_\la^\delta,y)  =\\
& \Leftrightarrow x_\la^\delta = (\id_\X + s \la \nabla_x \reg)^{-1}\circ (\id_\X - s \nabla_x \dis(\cdot,y))(x_\la^\delta) \\
& \Leftrightarrow x_\la^\delta \in \fix(  \prox_{s \la \reg} \circ  (\id_\X - s \la  \nabla_x  \dis(\cdot, y^\delta))
\end{align*}
which gives  the equivalence to \ref{lem:noisy3}.
\end{proof}

Lemma~\ref{lem:noisy} shows that minimizers of the Tikhonov functional are fixed points of the operator $\prox_{s \la \reg} \circ (\id_\X - s \la  \nabla_x  \dis(\cdot, y^\delta))$. Replacing $\prox_{s \la \reg}$ by a general denoiser results in the PnP method as defined next.

\begin{definition}[PnP fixed points]
Under\label{def:pnp} \ref{ass:A1}, \ref{ass:A2} we define the PnP reconstruction  using denoiser  $\Do(\la,\cdot)  \colon \X \to \X$ and step size $s \in (0, 2\beta)$ by $\PnP(\la,\cdot) \colon \Y \to \X$,
 \begin{equation} \label{eq:PnP}
\PnP(\la,y^\delta) \triangleq   \fix ( \Do(\la,\cdot) \circ (\id_\X -  s \nabla_x \dis(\cdot,y^\delta)) ) \,.
 \end{equation}
Note that  the step size is fixed throughout our analysis and therefore not indicated in the notation of  $\PnP(\la,\cdot)$.
 \end{definition}

To compute \eqref{eq:PnP} one may use for any initial value $x_{\la, 0}^\delta \in \X$  the fixed point iteration
 \begin{equation} \label{eq:PnP1}
    x_{\la, n+1}^\delta = \Do(\la,\cdot) \circ (x_{\la, n}^{\delta}  -  s \nabla_x  \dis(x_{\la, n}^\delta, y^\delta)  )  \,.
\end{equation}
In the special case where  $\Do(\la,\cdot) = \prox_{s \la \reg}$,  iteration  \eqref{eq:PnP1} reduced to the FBS  iteration. In the general case, we thus refer to \eqref{eq:PnP1} as PnP-FBS iteration (for $\argmin_x \dis(x,y)$ using denoiser $\Do(\la,\cdot)$).
Convergence of \eqref{eq:PnP1} towards $\PnP(\la,y^\delta)$ can be deduced from Banachs fixed point theorem:

\begin{proposition}[Convergence of PnP-FBS] \label{prop:PnPconv}
If \ref{ass:A1}, \ref{ass:A2} hold and $\Do(\la,\cdot)$ is a contraction,  then  $(x_{\la,n}^\delta)_{n\in \N}$ strongly converge  to $\PnP(\la,y^\delta) $. Moreover,  linear convergence \begin{equation*}
\forall n \in \N \colon \;
	\norm{x_{\la,n+1}^\delta - \PnP(\la,y^\delta)} \leq c \norm{x_{\la,n}^\delta - \PnP(\la,y^\delta)}
\end{equation*}
with some $c>0$ holds.
 \end{proposition}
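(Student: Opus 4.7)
My plan is to observe that the PnP-FBS iteration map
$\To_{\la, y^\delta} \triangleq \Do(\la,\cdot) \circ (\id_\X - s \nabla_x \dis(\cdot, y^\delta))$
is itself a contraction, and then invoke Banach's fixed point theorem. The unique fixed point is by definition $\PnP(\la, y^\delta)$, and the standard Banach argument yields linear convergence with rate equal to the contraction constant.

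The first step is to bound the Lipschitz constant of the composition. Since $s \in (0, 2\beta)$, Lemma~\ref{lem:grad-step} gives that $\id_\X - s\nabla_x \dis(\cdot, y^\delta)$ is non-expansive, i.e. $\lip(\id_\X - s \nabla_x \dis(\cdot, y^\delta)) \leq 1$. By assumption $\Do(\la,\cdot)$ is a contraction, so $c \triangleq \lip(\Do(\la,\cdot)) < 1$. Using the standard inequality $\lip(\To_1 \circ \To_2) \leq \lip(\To_1)\lip(\To_2)$ for the composition, we get $\lip(\To_{\la,y^\delta}) \leq c < 1$, so $\To_{\la,y^\delta}$ is a contraction on the Hilbert space $\X$.

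The second step is to apply Banach's fixed point theorem to conclude that $\To_{\la,y^\delta}$ has a unique fixed point $x^\star \in \X$, which coincides with $\PnP(\la, y^\delta)$ as defined in \eqref{eq:PnP}, and that $x_{\la,n}^\delta \to x^\star$ strongly for any initial value. Linear convergence then follows directly from applying the contraction estimate to consecutive iterates: since $x_{\la,n+1}^\delta = \To_{\la,y^\delta}(x_{\la,n}^\delta)$ and $x^\star = \To_{\la,y^\delta}(x^\star)$, we immediately get
\[
\norm{x_{\la,n+1}^\delta - x^\star} = \norm{\To_{\la,y^\delta}(x_{\la,n}^\delta) - \To_{\la,y^\delta}(x^\star)} \leq c \norm{x_{\la,n}^\delta - x^\star},
\]
which is exactly the desired inequality.

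There is essentially no hard part here; the argument is a direct application of Banach's theorem once the Baillon–Haddad-based non-expansiveness from Lemma~\ref{lem:grad-step} is combined with the contraction property of the denoiser. The only subtlety worth noting is that $\PnP(\la,y^\delta)$ is \emph{a priori} only defined as the fixed point set (via $\ffix$ vs $\fix$ notation), so one should explicitly remark that Banach's theorem guarantees this set is a singleton, legitimizing the notation $\PnP(\la,y^\delta)$ as a single element of $\X$.
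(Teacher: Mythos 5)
Your proposal is correct and follows essentially the same route as the paper: Lemma~\ref{lem:grad-step} gives non-expansiveness of the gradient step, composition with the contractive denoiser yields a contraction, and Banach's fixed point theorem delivers uniqueness of the fixed point together with linear convergence at rate $c = \lip(\Do(\la,\cdot))$. Your closing remark about the singleton fixed point set legitimizing the $\ffix$ notation is a nice touch but not a departure from the paper's argument.
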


\begin{proof}
According  to  Lemma~\ref{lem:grad-step}  and \ref{ass:A1}, \ref{ass:A2} and because $\Do(\la,\cdot)$ is contractive, $\Do(\la,\cdot) \circ (\id_\X - s \nabla_x \dis(\cdot,y))$ is contractive.  Hence Banachs fixed point theorem shows that $x_{\la,n}^\delta$ linearly converges to  $\PnP(\la,y^\delta)$, the  unique fixed point  of $\Do(\la,\cdot)  \circ  (\id_\X -  s\nabla_x \dis(\cdot,y^\delta))$.
\end{proof}

Note that the convergence result of Proposition \eqref{prop:PnPconv} is not a novel result  and included here for completeness. More general convergence results are also known. For example, in \cite{Ga21} it is shown that  \eqref{eq:PnP} converges to a fixed point of $\Do(\la,\cdot)  \circ ( \id_\X - s \nabla_x \dis(\cdot,y))$ if $\dis(\cdot,y)$ is convex and Fr\'echet differentiable with Lipschitz continuous gradient, $\Do(\la,\cdot)$ is $a$-averaged for some $a \in (0,1)$, and the fixed point set of $\Do(\la,\cdot)  \circ ( \id_\X - s \nabla_x \dis(\cdot,y))$ is not empty.

\begin{remark}[PnP-ADMM and PnP-PD]
Our theory targets fixed points of $\Do(\la,\cdot) \circ (\id_\X -  s\nabla_x \dis(\cdot,y^\delta)) )$ and therefore apply to any other family of PnP iterations as long as it shares fixed-points with PnP-FBS.
For example, the PnP version of ADMM (alternating direction method of multipliers) reads
\begin{align*}
x_{\la,n+1}^\delta &= \prox_{s \dis(\cdot, y^\delta)}(y_{\la,n}^\delta - z_{\la,n}^\delta) \\
y_{\la,n+1}^\delta &= \Do(\la,x_{\la,n+1}^\delta + z_{\la,n}^\delta)\\
z_{\la,n+1}^\delta &= z_{\la,n}^\delta + x_{\la,n+1}^\delta-y_{\la,n+1}^\delta \,,
\end{align*}
where $y_{\la,0}^\delta, z_{\la,0}^\delta \in \X$ are initializations, $(y_{\la,n}^\delta)_{n\in \N}$,  $(z_{\la,n}^\delta)_{n\in \N}$ are auxiliary sequences and  $(x_{\la,n}^\delta)_{n\in \N}$ is the main  sequence. In \cite[Proposition 3]{Wo19} is shown that if PnP-ADMM is convergent, then $(x_{\la,n}^\delta)_{n\in \N}$ converges to a fixed point of $\Do(\la,\cdot) \circ ( \id_\X - s \nabla_x \dis(\cdot,y))$. Consequently, if the fixed point is unique PnP-FBS and PnP-ADMM converge to the same point.
The same holds also for  primal dual PnP variants (PnP-PD) \cite[Remark 3.1]{Me17}.  The  regularization theory that we develop for PnP therefore applies equally to any of these algorithms.
\end{remark}

\section{PnP as regularization  method}
\label{sec:analysis}

In the following consider a family  $(\Do(\la,\cdot))_{\la > 0}$ of denoisers  each associated with the PnP-FBS iteration and associated PnP limits $\PnP(\la,y^\delta) = \fix ( \Do(\la,\cdot) \circ (\id_\X -  s\nabla_x \dis(\cdot,y^\delta)))$; see Definition~\ref{def:pnp}.

\subsection{Main assumptions}
\label{sec:assumptions}

Our analysis uses the following assumptions on the family  $(\Do(\la,\cdot))_{\la > 0}$.

\begin{definition}[Denoising family] \label{def:denoisers}
We call $(\Do(\la,\cdot))_{\la >0}$   admissible  family  of denoisers if the following hold:
\begin{enumerate} [label=(B\arabic*), leftmargin =2.5em]
\item \label{ass:B1} $ \forall \la>0 \colon  \Do(\la,\cdot) \colon \X \to \X$ is a contraction.
\item \label{ass:B2} $(\Do(\la,\cdot))_{\la>0} \to \id_\X $ strongly point-wise
\item  \label{ass:B3}$(\Do(\la,\cdot))_{\la>0} \to \id$ weakly uniformly on bounded sets.
\item \label{ass:B4} $\exists E \subseteq \X \, \forall  x\in E \colon
\norm{ \Do(\la,x)-x }  = \mathcal{O}(1- \lip(\Do(\la,\cdot)) )$.
\end{enumerate}
\end{definition}

Convergence of PnP-FBS and stability of PnP limits only requires \ref{ass:B1}. It can be seen as our requirement for $\Do(\la,\cdot)$ being a denoiser and is easily to achieve.
A denoiser might only be non-expansive rather than contractive, which is not directly included in our theory. However as we show below this can  be restored  by  a simple scaling trick.

\begin{lemma}[Non-expansive denoisers] \label{lem:scaled}
Let $(\Do(\la,\cdot))_{\la > 0} $ be a family of non-expansive operators satisfying \ref{ass:B2}, \ref{ass:B3} and let $\sigma \colon [0,\infty) \to (0,1]$ be  strictly decreasing,  continuous at zero with $\sigma(0)=1$ and   $ \forall x \in E\colon \norm{\Do(\la,x) - x}  = \mathcal O (1 - \sigma(\la))$ as $\la \to 0$. Then $( \sigma(\la) \Do(\la,\cdot))_{\la > 0}$ satisfies \ref{ass:B1}-\ref{ass:B4}.
\end{lemma}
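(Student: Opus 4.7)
The plan is to verify the four conditions \ref{ass:B1}--\ref{ass:B4} one at a time for $\To_\la \triangleq \sigma(\la) \Do(\la,\cdot)$, using throughout the elementary additive decomposition
\[
  \To_\la(x) - x \;=\; \sigma(\la)\bigl(\Do(\la,x) - x\bigr) \;+\; \bigl(\sigma(\la) - 1\bigr) x \,,
\]
which splits the perturbation of the identity into a denoising part and a scaling part. Since $\sigma$ is strictly decreasing with $\sigma(0) = 1$ we have $\sigma(\la) \in (0,1)$ for all $\la > 0$.

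For \ref{ass:B1}, observe $\lip(\To_\la) = \sigma(\la)\lip(\Do(\la,\cdot)) \leq \sigma(\la) < 1$, so $\To_\la$ is a contraction. For \ref{ass:B2}, fix $x \in \X$ and apply the decomposition: the first summand is bounded by $\norm{\Do(\la,x)-x}$ and vanishes by \ref{ass:B2} for $\Do$, while the second tends to zero by continuity of $\sigma$ at $0$. For \ref{ass:B3}, fix $z \in \X$ and a bounded set $B \subseteq \X$; then
\[
  \sup_{x \in B} \abs{\inner{\To_\la(x)-x}{z}} \;\leq\; \sup_{x \in B}\abs{\inner{\Do(\la,x)-x}{z}} \;+\; \bigl(1-\sigma(\la)\bigr)\, \bigl(\sup_{x\in B}\norm{x}\bigr)\, \norm{z}\,,
\]
and both terms tend to zero: the first by \ref{ass:B3} for $\Do$, the second by continuity of $\sigma$ at $0$ together with boundedness of $B$.

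The only slightly subtle point is \ref{ass:B4}, where the Lipschitz constant on the right-hand side now refers to $\lip(\To_\la) = \sigma(\la)\lip(\Do(\la,\cdot))$ and not to $\sigma(\la)$ alone. For $x \in E$, the decomposition together with the hypothesis $\norm{\Do(\la,x)-x} = \mathcal{O}(1-\sigma(\la))$ yields
\[
  \norm{\To_\la(x) - x} \;\leq\; \sigma(\la)\norm{\Do(\la,x)-x} \;+\; \bigl(1-\sigma(\la)\bigr)\norm{x} \;=\; \mathcal{O}\bigl(1-\sigma(\la)\bigr)\,.
\]
Since $\lip(\Do(\la,\cdot)) \leq 1$, we have the key comparison $1 - \sigma(\la) \leq 1 - \sigma(\la)\lip(\Do(\la,\cdot)) = 1 - \lip(\To_\la)$, which upgrades the previous bound to $\norm{\To_\la(x)-x} = \mathcal{O}(1-\lip(\To_\la))$, i.e.\ exactly \ref{ass:B4} for the scaled family. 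The main obstacle is precisely this last comparison: one must resist the temptation to identify $\lip(\To_\la)$ with $\sigma(\la)$ and instead use non-expansiveness of $\Do(\la,\cdot)$ in the favourable direction.
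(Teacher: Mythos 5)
Your proof is correct and follows essentially the same route as the paper: the same additive splitting of $\sigma(\la)\Do(\la,x)-x$ into a denoising part and a scaling part, the same bound for \ref{ass:B3}, and the same key observation for \ref{ass:B4} that $\lip(\sigma(\la)\Do(\la,\cdot)) \leq \sigma(\la)$ makes the $\mathcal{O}(1-\sigma(\la))$ bound imply the $\mathcal{O}(1-\lip(\sigma(\la)\Do(\la,\cdot)))$ bound. No gaps; your explicit remark on the direction of that last comparison is exactly the point the paper's estimate uses implicitly.
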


\begin{proof}
Clearly, $\lip(\sigma(\la)\Do(\la,\cdot)) \leq \sigma(\la)$ and for all $x \in \X$ we have $\sigma(\la)\Do(\la,x) \to x$ strongly, which gives \ref{ass:B1}, \ref{ass:B2}.
Now let $z \in \X$ and $B \subseteq \X$ be bounded by $R>0$, then
\begin{align*}
&\sup_{x \in B} \abs{\inner{\sigma(\la)\Do(\la,x) -x}{z}}
\\& \qquad\leq \sup_{x \in B}
\abs{ \sigma(\la)\inner{ \Do(\la,x) -x}{z} -(1-\sigma(\la)) \inner{x}{z} }
\\& \qquad\leq
\sigma(\la) \sup_{x \in B} \abs{\inner{ \Do(\la,x) -x}{z}} + R (1-\sigma(\la))  \norm{z} \,,
\end{align*}
which converges to zero as $\la \to 0$, showing \ref{ass:B3}.
Finally, for $x \in E$ we have
\begin{align*}
&\frac{\norm{\sigma(\la) \Do(\la,x)-x}}{1-\lip(\sigma(\la)\Do(\la,\cdot))} \\
&\qquad\leq  \frac{\norm{\sigma(\la) (\Do(\la,x)-x) -(1-\sigma(\la)) x}}{1-\sigma(\la)} \\
&\qquad\leq \sigma(\la) \frac{\norm{\Do(\la,x)-x}}{1-\sigma(\la)} + \norm{x} \\
&\qquad\leq C + \norm{x}\,,
\end{align*}
which  is \ref{ass:B4}.
\end{proof}

The additional assumptions \ref{ass:B2}-\ref{ass:B4} are used to establish convergence as $\delta \to 0$.  Condition \ref{ass:B2} is quite  natural and requires the denoising effect to  vanish in the limit. Additionally, the convergence requires some form of uniform convergence condition. Lemma~\ref{lem:lip} shows that assuming uniform convergence on the whole space would be too strong.

\begin{lemma} \label{lem:lip}
There exists no sequence $(\To_k)_{k\in \N}$ of contractions $\To_k \colon \X \to \X$ which converges uniformly to $\id_\X$.
\end{lemma}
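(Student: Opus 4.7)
My plan is to argue by contradiction, using only the triangle inequality and the strict contraction property, together with the fact that a non-trivial Hilbert space has elements of arbitrarily large norm.

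First I would assume such a sequence $(\To_k)_{k \in \N}$ exists. For each $k \in \N$, set $c_k \triangleq \lip(\To_k) < 1$ and $\eps_k \triangleq \sup_{x \in \X}\norm{\To_k(x)-x}$; the hypothesized uniform convergence gives $\eps_k \to 0$, and in particular $\eps_k < \infty$ for each $k$.

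The key observation is that for any $x,y \in \X$ the triangle inequality yields
\begin{equation*}
\norm{x-y} \leq \norm{x-\To_k(x)} + \norm{\To_k(x)-\To_k(y)} + \norm{\To_k(y)-y} \leq 2\eps_k + c_k \norm{x-y},
\end{equation*}
which can be rearranged to $(1-c_k)\norm{x-y} \leq 2\eps_k$. This inequality must hold for every $x,y\in\X$ simultaneously.

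Now fix any $k$. Since $c_k<1$ strictly, one has $1-c_k>0$ and $\eps_k$ is a fixed finite number. Assuming $\X \neq \{0\}$ (the only case of interest), pick any nonzero $v \in \X$ and take $x=nv$, $y=0$; letting $n\to\infty$ makes the left-hand side arbitrarily large, contradicting the uniform bound $2\eps_k$. This rules out the existence of the sequence. I do not expect any real obstacle here: the heart of the argument is a one-line triangle inequality, and the only subtle point is remembering that \emph{contraction} in this paper means $\lip(\To_k)<1$ strictly, which is exactly what makes $1-c_k$ bounded away from zero for each individual $k$.
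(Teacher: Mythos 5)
Your proof is correct and takes essentially the same route as the paper's: both combine the triangle inequality with the strict bound $\lip(\To_k)<1$ to obtain $(1-\lip(\To_k))\norm{x-y}\leq 2\eps_k$ uniformly in $x,y$, and then let $\norm{x-y}\to\infty$ in a nontrivial space to reach a contradiction. Your write-up is, if anything, slightly cleaner (it makes the needed assumption $\X\neq\{0\}$ explicit, and only needs one sufficiently large $k$ with $\eps_k<\infty$), so no substantive differences to report.
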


\begin{proof}
Assume $(\To_k)_{k\in \N}$  converge uniformly to $\id_\X$ and $\eps> 0$. Then we can choose $N \in \N$ such that $\norm{\To_k x -x} < \eps$ for $k \geq N$ and $x \in \X$ and thus $\{ x \mid  \norm{\To_k x -x} < \eps \} = \X$. In particular, there exists a sequence of elements  $x_\ell$  in this set with  $\norm{x_k-x_\ell} \to \infty$. Further, $\norm{x_k-x_\ell}  \leq  \norm{\To_k(x_k) - x_k} + \norm{\Do_k(x_k) - x_\ell} \leq \eps + \lip(\To_k)\norm{x_k-x_\ell}$. Thus $1  \leq  \eps/\norm{x_k-x_\ell} + \lip(\To_k)  \to \lip(\To_k)$.
This contradicts the assumption that $\To_k$ is a contraction,
\end{proof}

In light of  Lemma~\ref{lem:lip},  Condition \ref{ass:B3} looks only at bounded subsets of $\X$ and, furthermore  considers convergence with respect to the weak topology only. For example, we will show that soft thresholding converges uniformly to $\id$ on bounded sets with respect to the weak topology but not with respect to the norm topology; see Example~\ref{ex:thresh}. The same holds for a scaled version of the soft thresholding operator to make sure that they are contraction mappings.

Condition \ref{ass:B4} is also some kind of uniform convergence condition. In fact, the point-wise convergence in \ref{ass:B2} implies that the Lipschitz constants converge to 1; see Lemma \ref{lem:lip-conv}. Thus  \ref{ass:B4}  means  that $\norm{\Do(\la,x)-x}$ for $\la \to 0$ converges to $0$ at least as fast as $\lip(\Do(\la,\cdot))$ converges to $1$.

\begin{lemma}\label{lem:lip-conv}
Let $(\Do_k)_{k\in \N}$ be a sequence of contractions  $\Do_k \colon \X \to \X$   that converge point-wise to $\Do$ with  $\lip(\Do)  =1$. Then  $\lip(\Do_k) \to 1$.
\end{lemma}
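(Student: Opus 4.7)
The plan is to prove this via a two-sided squeeze on $\lip(\Do_k)$. The upper bound is free: each $\Do_k$ is a contraction, so $\lip(\Do_k) < 1$ for every $k$, which gives $\limsup_{k\to\infty} \lip(\Do_k) \leq 1$. The real content is the matching lower bound $\liminf_{k\to\infty} \lip(\Do_k) \geq 1$.

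For the lower bound, I would fix $\eps \in (0,1)$ and exploit the assumption $\lip(\Do) = 1$. By definition of the Lipschitz constant as a supremum, there exist points $x, y \in \X$ with $x \neq y$ such that
\begin{equation*}
    \norm{\Do(x) - \Do(y)} \geq (1 - \eps) \norm{x - y} \, .
\end{equation*}
These two points are now held fixed. By pointwise convergence $\Do_k(x) \to \Do(x)$ and $\Do_k(y) \to \Do(y)$, so the reverse triangle inequality yields $\norm{\Do_k(x) - \Do_k(y)} \to \norm{\Do(x) - \Do(y)}$. In particular, for all sufficiently large $k$,
\begin{equation*}
    \norm{\Do_k(x) - \Do_k(y)} \geq (1 - 2\eps) \norm{x - y} \, ,
\end{equation*}
and consequently $\lip(\Do_k) \geq 1 - 2\eps$. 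Hence $\liminf_{k\to\infty} \lip(\Do_k) \geq 1 - 2\eps$, and since $\eps$ was arbitrary, $\liminf_{k\to\infty} \lip(\Do_k) \geq 1$.

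Combining the two bounds gives $\lip(\Do_k) \to 1$. There is essentially no obstacle here: the only subtlety is realizing that pointwise convergence is enough because one only needs convergence at the \emph{two specific witnesses} $x, y$ realizing the near-maximal ratio for $\Do$, rather than any kind of uniform control. This is also the reason the conclusion would fail without the hypothesis $\lip(\Do) = 1$, since a smaller Lipschitz constant of the limit would leave room for $\lip(\Do_k)$ to cluster below $1$.
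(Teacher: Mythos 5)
Your proof is correct and follows essentially the same route as the paper: both use that the $\Do_k$ are contractions for $\limsup_k \lip(\Do_k)\leq 1$, and both obtain the matching lower bound from pointwise convergence at a pair of points via the (reverse) triangle inequality — the paper keeps the pair arbitrary to conclude $\lip(\Do)\leq\liminf_k\lip(\Do_k)$, while you pick near-optimal witnesses and let $\eps\to 0$, which is the same argument in different packaging. No gaps; the quantitative step $\norm{\Do_k(x)-\Do_k(y)}\geq(1-2\eps)\norm{x-y}$ is justified because $\norm{x-y}>0$.
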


\begin{proof}
By the triangle inequality, we have $\norm{\Do x_1- \Do x_2} \leq \lip(\Do_k)\norm{x_1-x_2} + \norm{\Do x_1- \Do_k x_1} + \norm{\Do_k x_2 -\Do x_2}$. With the point-wise convergence of $(\Do_k)_{k \in \N}$   this shows that $\Do$ is Lipschitz continuous with $\lip(\Do) \leq \liminf_{k \to \infty} \lip(\Do_k)$. Therefore,
\begin{equation*}
	1 = \lip(\Do) \leq \liminf_{k \to \infty} \lip(\Do_k) \leq  \limsup_{k \to \infty} \lip(\Do_k) \leq 1
	\end{equation*}
from which we conclude $\lip(\Do_k) \to 1$.
\end{proof}

\subsection{Examples}

\subsubsection{Proximal denoisers}

For $\reg  \in \Gamma_0(\X) $ consider  minimizers of the Tikhonov functional $\dis(\cdot,y^\delta) + \la \reg$ which are according  to Proposition  \eqref{lem:noisy}  equal to PnP fixed points  with $\Do(\la,\cdot) = \prox_{s\la \reg}$.
In this subsection we show  that variational regularization actually fits in our framework. Further we provide examples where PnP extends variational regularization beyond proximal denoisers.

\begin{lemma} \label{lem:strong}
If  $\reg  \in \Gamma_0(\X) $ is $a$-strongly convex then $\Do(\la,\cdot) = \prox_{s\la \reg}$ is $1 / (1+as\la)$-Lipschitz and  satisfies \ref{ass:B1}, \ref{ass:B2}, \ref{ass:B4}.
\end{lemma}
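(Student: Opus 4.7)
The plan is to verify the Lipschitz constant first, from which \ref{ass:B1} is immediate, and then handle \ref{ass:B2} and \ref{ass:B4} separately using the variational characterization of the proximal map.

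\textbf{Lipschitz constant and \ref{ass:B1}.} Since $\reg$ is $a$-strongly convex, the scaled functional $s\la\reg$ is $as\la$-strongly convex, so $\partial(s\la\reg)$ is $as\la$-strongly monotone. I would write $p_i \triangleq \prox_{s\la\reg}(x_i)$ for $i\in\{1,2\}$; the defining optimality condition is $x_i - p_i \in s\la\,\partial\reg(p_i)$. Strong monotonicity then gives
\begin{equation*}
\inner{(x_1 - p_1) - (x_2 - p_2)}{p_1-p_2} \geq as\la\,\norm{p_1-p_2}^2 \,,
\end{equation*}
which rearranges to $(1+as\la)\norm{p_1-p_2}^2 \leq \inner{x_1-x_2}{p_1-p_2} \leq \norm{x_1-x_2}\norm{p_1-p_2}$. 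This yields the announced Lipschitz constant $1/(1+as\la) < 1$, hence \ref{ass:B1}.

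\textbf{Pointwise convergence \ref{ass:B2}.} I would exploit the defining inequality $\frac{1}{2}\norm{x-\prox_{s\la\reg}(x)}^2 + s\la\,\reg(\prox_{s\la\reg}(x)) \leq \frac{1}{2}\norm{x-z}^2 + s\la\,\reg(z)$ valid for every $z\in\X$. Plugging $z=x$ and using $\reg \geq 0$ gives
\begin{equation*}
\norm{\prox_{s\la\reg}(x)-x}^2 \leq 2s\la\,\reg(x) \,,
\end{equation*}
so $\prox_{s\la\reg}(x) \to x$ for every $x\in\dom(\reg)$. For $x\in\overline{\dom(\reg)}\setminus\dom(\reg)$ the nonexpansiveness of the proximal map together with a density argument extends the convergence to the closure of the domain.

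\textbf{Rate condition \ref{ass:B4}.} The natural choice is $E \triangleq \dom(\partial\reg)$. For $x\in E$ pick any $v\in\partial\reg(x)$; writing $p \triangleq \prox_{s\la\reg}(x)$, the inclusion $(x-p)/(s\la) \in \partial\reg(p)$ combined with monotonicity of $\partial\reg$ yields $\inner{(x-p)/(s\la) - v}{p-x}\geq 0$, i.e.\ $\norm{x-p}^2 \leq s\la \inner{v}{x-p} \leq s\la\norm{v}\norm{x-p}$, hence $\norm{\prox_{s\la\reg}(x)-x}\leq s\la\,\norm{v}$. Since $1-\lip(\Do(\la,\cdot)) = 1 - 1/(1+as\la) = as\la/(1+as\la)$, the quotient
\begin{equation*}
\frac{\norm{\prox_{s\la\reg}(x)-x}}{1-\lip(\Do(\la,\cdot))} \leq \frac{s\la\norm{v}(1+as\la)}{as\la} = \frac{\norm{v}(1+as\la)}{a}
\end{equation*}
stays bounded as $\la\to 0$, which is precisely \ref{ass:B4}.

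The only genuinely delicate point is \ref{ass:B2} when $\dom(\reg)\subsetneq\X$: strictly speaking the upper bound $\sqrt{2s\la\,\reg(x)}$ degenerates for $x\notin\dom(\reg)$, and for $x\notin\overline{\dom(\reg)}$ pointwise convergence to $\id_\X$ actually fails (e.g.\ when $\reg$ contains an indicator term). I will therefore state \ref{ass:B2} assuming $x$ lies in $\overline{\dom(\reg)}$, which covers the generic case of interest and matches the setting implicitly used in the rest of the analysis.
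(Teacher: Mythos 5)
Your proof is correct and follows the same overall strategy as the paper (strong monotonicity of $s\la\partial\reg$ gives the contraction factor $1/(1+as\la)$; a bound of order $s\la$ on $\norm{\prox_{s\la\reg}(x)-x}$ on a suitable set $E$ is then compared with $1-\lip(\prox_{s\la\reg})\geq as\la/(1+as\la)$ to get \ref{ass:B4}), but the execution differs in instructive ways. Where the paper simply cites \cite[Prop.~23.13]{bauschke2011convex} for the Lipschitz constant, you rederive it from the resolvent inclusion $x_i-p_i\in s\la\,\partial\reg(p_i)$ and strong monotonicity, which is self-contained. For \ref{ass:B4} the paper invokes the expansion $\prox_{s\la\reg}(x)=x-s\la\,\partial_0\reg(x)+o(s\la)$, whereas you use the elementary monotonicity estimate $\norm{x-\prox_{s\la\reg}(x)}\leq s\la\norm{v}$ for $v\in\partial\reg(x)$, $x\in E=\dom(\partial\reg)$; this is simpler, gives an explicit constant $\norm{v}(1+as\la)/a$, and avoids any appeal to the $o(s\la)$ asymptotics (note the paper states its expansion for $x\in\dom(\reg)$, while the cleaner hypothesis is $x\in\dom(\partial\reg)$, exactly the set you choose — and \ref{ass:B4} only asks for \emph{some} set $E$, so this is legitimate). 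Finally, you explicitly verify \ref{ass:B2} via $\norm{\prox_{s\la\reg}(x)-x}^2\leq 2s\la\,\reg(x)$ plus nonexpansiveness and density, something the paper's proof passes over in silence, and your closing caveat is a genuine observation: pointwise convergence to $\id_\X$ on all of $\X$ fails when $\overline{\dom(\reg)}\neq\X$ (e.g.\ when $\reg$ contains an indicator), so the lemma as stated implicitly assumes $\dom(\reg)$ dense; flagging and restricting to $\overline{\dom(\reg)}$ is the honest fix.
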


\begin{proof}
Since $s\la\reg$ is $as\la$-strongly convex, $s\la \partial \reg$ is strongly monotone with constant $as\la$.  With \cite[Prop. 23.13]{bauschke2011convex} this shows that $\prox_{s\la \reg} = (\id+s\la \partial \reg)^{-1}$ is Lipschitz-continuous with constant $1 / (1+as\la)$.  According to     \cite{bauschke2011convex}, $\prox_{s\la \reg} (x) = x -s\la  \partial_0 \reg(x) + o(s\la)$  for $x \in \dom(\reg)$  where $\partial_0 \reg (x)$ denotes the subgradient with minimal norm. Together with   $1- \lip(\prox_{s\la \reg}) \geq  as\la / (1+as\la)$  this gives  $\norm{x-\prox_{s\la \reg}x} = \mathcal O(1- \lip(\prox_{s\la \reg}))$.
\end{proof}

\begin{remark} \label{remark:scale-reg}
If $\reg$ is not strongly convex then $ \prox_{s\la \reg}$ is still non-expansive. To derive a contraction from $ \prox_{s\la \reg}$ one may apply  Proposition \ref{lem:scaled}.  If $\Do(\la,\cdot) = \prox_{s \la\reg}$ satisfies \ref{ass:B3} and $\sigma(\la)=(1-\la)_+$, then
for all $x \in \dom(\reg)$ we have
\begin{multline*}
\frac{\norm{\prox_{s\la \reg}(x)-x}}{1-(1-\la)}
=
\frac{1}{\la}  \norm{s \la  \partial_0 \reg(x)  + o(\la)}
\\ \leq s    \norm{\partial_0 \reg(x)}  + \norm{ o(\la)/\la} \to s   \norm{\partial_0 \reg(x)}
\text{ as $\la \to 0$} \,.
\end{multline*}
Thus  $( (1-\la)_+ \Do(\la,\cdot))_{\la > 0}$ satisfies \ref{ass:B1}-\ref{ass:B4}.
\end{remark}

Assumption \ref{ass:B3}, for example, is satisfied for soft and hard thresholding as the following example shows.

\begin{example}[Thresholding]\label{ex:thresh}
Consider the hard and soft thresholding  operators in $\ell^2(\N)$, $\hard(\la,\cdot), \soft(\la,\cdot) \colon \ell^2(\N) \to \ell^2(\N)$, respectively, defined by $\soft(\la,x)_i = \hard(\la,x)_i =0$ if $\abs{x_i} \leq \la$ and
\begin{align}
\hard(\la,x)_i &= \abs{x_i}   \\
\soft(\la,x)_i &=  \sign{(x_i)}(\abs{x_i} - \la)
\end{align}
otherwise.  Then $(\hard(\la, \cdot))_{\la >0}$, $(\soft(\la, \cdot))_{\la >0}$ converge uniformly to $\id$ on bounded sets with  respect to the weak topology, but not with respect to the norm topology.
\end{example}

\begin{proof}
Let us start with hard thresholding.   Without loss of generality assume that  $z \in \ell^2(\N)$ has  non-negative entries ordered in descending order and that $B$  is the closed centered unit ball. Then $\inner{\hard(\la,x)-x}{z} = -  \sum_{\abs{x_i}  \leq \la } x_i z_i$ and thus
\begin{equation} \label{eq:threshC1}
	\sup_{x \in B} \inner{\hard(\la,x)-x}{z}  = \sup_{x \in C_\la} \inner{x}{z} \,,
\end{equation}
where  $C_\la \triangleq \{x \in B \mid \norm{x}_\infty \leq \la \}$.  Set $C_\la$  is convex and closed and $\argmin_{x\in C_\la} \norm{x-z}^2
=\argmax_{x\in C_\la} \inner{x}{z}$. The projection theorem thus gives a unique $x^*_\la$ maximizing $\inner{x}{z}$ over $C_\la$.  One verifies that $x_\la^*$ is given by
\begin{equation*}
 	\forall i \in \N \colon \quad
	 (x_\la^*)_i =
	\begin{cases}
	\la  & i = 0, \ldots, n^*_\la \\
	a^*_\la z_i  & \text{otherwise } \,,
	\end{cases}
\end{equation*}
where  $a^*_\la$ is such hat $\norm{x_\la^*}=1 $ and $n^*_\la \in \N$ is the smallest natural number with $\norm{x^*_\la}_\infty \leq \la $. We have $n_\la^* +1 \leq 1 / \la^2$, $n_\la^* \to \infty$ as $\la \to 0$ and
\begin{equation} \label{eq:threshC2}
\sup_{x \in C_\la} \inner{x}{z}  =  \sum_{i=0}^{n^*_\la} \la  z_i + a^*_\la \sum_{i=n^*_\la+1}^\infty z_i^2 \\
 \leq \frac{1}{\sqrt{n_\la^* +1}}\sum_{i=0}^{n_\la^*} \abs{z_i} + a^*_\la \sum_{i=n_\la^*+1}^\infty \abs{z_i}^2  \,.
\end{equation}
Let $(e_k)_{k\in \N}$ with $(e_k)_i= 1$ for $k=i$ and  $0$ otherwise be the standard basis of $\ell^2(\N)$, then $s_k \triangleq (k+1)^{-1/2} \sum_{i=0}^{k} e_i  \to 0$ weakly and thus   $1/\sqrt{n^*_\la+1} \sum_{i=0}^{n^*_\la} \abs{z_i} = \inner{s_k}{\abs{z}} \to 0$. With  \eqref{eq:threshC1} and \eqref{eq:threshC2}  we get $\sup_{x \in B} \inner{\hard(\la,x)-x}{z}  \to 0$.

Because  $\sup_{x \in B} \inner{\soft(\la, x)-x}{z}  = \sup_{x \in C_\la} \inner{x}{z}$  the above proof applies to soft thresholding, too.
Finally,
\begin{equation*}
	\sup_{x \in B} \norm{\soft(\la, x)-x}
\\ =\sup_{x \in B} \norm{\hard(\la, x)-x}
 = \sup_{x \in C_\la} \norm{x}=1
\end{equation*}
and therefore both operators do not uniformly converge on bounded sets in the norm topology.
\end{proof}

Soft thresholding is the proximal operator of the $\ell^1$-norm. Thus, according to Example~\ref{ex:thresh} and Remark~\ref{remark:scale-reg}  the scaled thresholding operations
\begin{equation*}
   \forall \la >0\colon \quad   \Do_\la \triangleq (1-\la)_+ \soft(\la, \cdot)
\end{equation*}
satisfy \ref{ass:B1}-\ref{ass:B4} and thus form an admissible family  of denoisers.

\subsubsection{Beyond proximal denoisers}

Not all denoisers are of the proximal type. According to a theorem of Moreau \cite{Mo65}, $\Do \colon \X \to \X$ is proximal if and only if it is non-expansive and the subgradient of a convex functional. If $\Do$ is linear, then $\Do$ is  proximal  if and only if it is self-adjoint and $\norm{\Do} \leq 1$.  Let us provide simple examples satisfying \ref{ass:B1}-\ref{ass:B4}.

\begin{example}[Filter methods] Let $\Uo \colon \X \to \ell^2(\N)$ be unitary and  consider the diagonal operators $\Mo_\la \colon \ell^2(\N) \to \ell^2(\N) $ defined by $(\Mo_\la (x))_i  = m_{\la,i} x_i$ with bounded $m_{\la,i}$. Obviously  $\Do(\la,\cdot) = \Uo \Mo_\la  \Uo^*$ is linear and bounded. Moreover
\begin{itemize}
\item $\Do(\la,\cdot)$ self-adjoint $\Leftrightarrow$
$\forall i \colon m_{\la,i} \in \R$.
\item $\Do(\la,\cdot)$  positive $\Leftrightarrow$
$\forall i \colon m_{\la,i} >0$.
\item $\Do(\la,\cdot)$ contraction $\Leftrightarrow$ $\sup_\la \abs{m_{\la,i}} < 1$.
\end{itemize}
By  Moreau's theorem, $\Do(\la,\cdot)$ is  proximal if and only  if $m_{\la,i} \in [0,1]$. If in addition $m_{\la,i} \in (0, 1- \eps] $ for some $ \eps \in (0,1)$ then $\Do(\la,\cdot)$ is proximal  and contractive.

Linear proximal mappings have to be self-adjoint and positive and it easy to construct linear contractions of the form $\Uo \Lo_\la   \Uo^*$ which are not proximal. For example, $\Lo_\la \colon \ell^2(\N) \to \ell^2(\N) $ defined by $(\Lo_\la (x))_i  = (1-2\la) x_i + \la x_{i-1}$ is a contraction  but not self-adjoint. Therefore $\Do(\la,\cdot) = \Uo \Lo_\la  \Uo^*$ is a contraction but not  proximal.
\end{example}

Consider the discrete convolution operator  $\Ko \colon  \ell^2(\Z) \to \ell^2(\Z)$  with kernel $k \in \ell^1(\Z)$ defined by
\begin{equation} \label{eq:K}
	\Ko x (n) = (k \ast x) (n) \triangleq  \sum_{m \in \Z} k(m) x(n-m) \,.
\end{equation} Write  $(\ft k)(z) \triangleq \sum_{m \in \Z} k(m) z^{-m}$ for $z \in \sph^1$  for  the Fourier transform  of $k$.  Then by Plancherel's theorem,  $\norm{\Ko} =  \norm{\ft k}_\infty$.  In particular, $\Ko$ is bounded if and  only  if $\ft k$ is bounded.   If $k$ is not symmetric   then   $\Ko$ is  not self-adjoint  and therefore not  proximal.  As  specific example for a  family of non-proximal convolutions that satisfy \ref{ass:B1}-\ref{ass:B4} is  presented  in the following  Example~\ref{ex:causal}.

\begin{example}[Causal \label{ex:causal}denoising]
Consider a family of discrete convolutions $\Ko_\la$ of the form \eqref{eq:K} with kernels and associated Fourier transforms
\begin{align}\label{eq:ker}
	k_\la(m) &=  (1-e^{-1/\la}) e^{-m(\la+1)/\la} \, \mathbf{1}_{m\geq 0}    \,,
	\\
	(\ft k_\la)(z) &= \frac{z (1-e^{-1/\la})  }{ z-e^{-1/(1+\la)}} \,.
\end{align}
For  $z \in \ell^2(\Z)$ and bounded $B \subseteq \ell^2(\Z)$ one verifies:
\begin{itemize}
\item $\lip(\Ko_\la) = \norm{\Ko_\la }  = \norm{\ft k_\la}_{\infty}  < 1$.
\item $ \norm{k_\la \ast x - x} \leq  \norm{1-\ft k_\la}_\infty \norm{x} $.
\item  $\sup_{x \in B}  \abs{\inner{k_\la \ast x -x}{z}} \leq \norm{1-\ft k_\la}_\infty \norm{z} \sup_{x \in B} \norm{x}$.
\item $\norm{1-\ft k_\la}_\infty \to 0$ as $\al \to 0$.
\end{itemize}
This gives \ref{ass:B1}-\ref{ass:B3}.  Further, with
\begin{equation*}
\frac{\norm{1-\ft k_\la}_\infty}{1-\norm{\ft k_\la}_{\infty} }
= \frac{1+e}{1+e^{(1+\la)/\la}} \cdot \frac{e^{(1+\la)/\la}-1}{e-1}
\to \frac{e+1}{e-1}
\end{equation*}
we also obtain \ref{ass:B4}.
\end{example}

\subsection{Stability}
\label{sec:stability}

As first theoretical question we answer the stability.  In fact we will give a  quantitative  stability estimate.

\begin{theorem}[Stability \label{thm:stabest} estimate]
Consider~\eqref{eq:min}  with \ref{ass:A1}, \ref{ass:A2}, \ref{ass:A4} and let $\Do(\la,\cdot)$ be  contractive.
Then for all $y_1, y_2 \in \Y$ we have
\begin{equation}\label{eq:stab-estimate}
\norm{ \PnP(\la, y_1) -   \PnP(\la, y_2)}  \leq  \frac{\gamma \lip(\Do(\la,\cdot))}{1- \lip(\Do(\la,\cdot))}  \sup_{x \in \X} \norm{\nabla_x \dis (x,y_1)-\nabla_x \dis (x,y_2)} \,.
\end{equation}
 \end{theorem}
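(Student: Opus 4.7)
The plan is to combine Banach's fixed point theorem (to know the two PnP points exist as unique fixed points) with a short triangle-inequality estimate that exploits Lemma~\ref{lem:grad-step} for a \emph{fixed} data argument. Throughout let $\To(\la,y) \triangleq \Do(\la,\cdot) \circ (\id_\X - s \nabla_x \dis(\cdot,y))$; by \ref{ass:A1}--\ref{ass:A2} and Lemma~\ref{lem:grad-step}, $\id_\X - s \nabla_x \dis(\cdot,y)$ is non-expansive, and since $\Do(\la,\cdot)$ is contractive, $\To(\la,y)$ is itself contractive with Lipschitz constant at most $\lip(\Do(\la,\cdot))$. Hence $x_i \triangleq \PnP(\la,y_i)$ exists, is unique and satisfies the fixed point equation $x_i = \Do(\la, x_i - s\nabla_x\dis(x_i,y_i))$ for $i=1,2$. (I suspect the constant $\gamma$ in the statement is simply the step size $s$, since nothing else of that name appears in scope.)

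Applying the Lipschitz bound on $\Do(\la,\cdot)$ to the fixed point identities gives
\begin{equation*}
\norm{x_1-x_2} \leq \lip(\Do(\la,\cdot)) \, \norm{(x_1 - s\nabla_x\dis(x_1,y_1)) - (x_2 - s\nabla_x\dis(x_2,y_2))}.
\end{equation*}
The key step is to insert $\pm\, s\nabla_x\dis(x_2,y_1)$ inside the norm and apply the triangle inequality, writing the right-hand side as
\begin{equation*}
\norm{(x_1 - s\nabla_x\dis(x_1,y_1)) - (x_2 - s\nabla_x\dis(x_2,y_1))} + s\,\norm{\nabla_x\dis(x_2,y_1) - \nabla_x\dis(x_2,y_2)}.
\end{equation*}
The first summand is controlled by $\norm{x_1-x_2}$ using Lemma~\ref{lem:grad-step} applied to the fixed datum $y_1$, and the second is bounded by $s \sup_{x \in \X}\norm{\nabla_x\dis(x,y_1)-\nabla_x\dis(x,y_2)}$.

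Collecting these bounds yields
\begin{equation*}
\norm{x_1-x_2} \leq \lip(\Do(\la,\cdot))\Bigl(\norm{x_1-x_2} + s \sup_{x \in \X}\norm{\nabla_x\dis(x,y_1)-\nabla_x\dis(x,y_2)}\Bigr),
\end{equation*}
and since $\lip(\Do(\la,\cdot)) < 1$ the term $\lip(\Do(\la,\cdot))\norm{x_1-x_2}$ can be absorbed on the left, giving precisely \eqref{eq:stab-estimate} with $\gamma=s$. Continuity of $\PnP(\la,\cdot)$ (i.e.\ Theorem~\ref{thm:stability}) is then an immediate consequence: by \ref{ass:A4} the supremum on the right tends to $0$ whenever $y_2\to y_1$ in norm. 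No real obstacle is anticipated; the only subtlety is making sure to split the gradient difference so that Lemma~\ref{lem:grad-step} applies to a single fixed datum rather than to the bivariate map $(x,y)\mapsto x - s\nabla_x\dis(x,y)$.
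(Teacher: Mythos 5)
Your proof is correct and takes essentially the same route as the paper: both are Nadler-type perturbation arguments comparing the fixed points of two contractions, using Lemma~\ref{lem:grad-step} for the gradient step at a fixed datum, the Lipschitz constant of $\Do(\la,\cdot)$ for the data-difference term, and absorption of $\lip(\Do(\la,\cdot))\norm{x_1-x_2}$ to produce the $1/(1-\lip(\Do(\la,\cdot)))$ factor; the only difference is whether the triangle inequality is applied before or after factoring out $\lip(\Do(\la,\cdot))$. Your identification $\gamma=s$ is consistent with the paper, where the step size is left implicit in the constant.
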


\begin{proof}
The following proof uses ideas from \cite{Na68}. Fix $\la >0$, let $y_1, y_2 \in \Y$  and set $\To_i \triangleq \Do(\la,\cdot) \circ (\id_\X -  s \nabla_x \dis ( \cdot,y_i))$ for  $i=1,2$. Then   $\To_i$ is a contraction  with $ \lip(\To_i) \leq \lip (\Do(\la,\cdot))$. Write $x_i =  \ffix(\To_i) =  \PnP(\la, y_i) $. Then
\begin{align*}
\norm{ x_1 -   x_2}
& = \norm{  \To_1 x_1 -  \To_2  x_2}
\\
& \leq  \norm{  \To_1 x_1 - \To_2  x_1}  +\norm{  \To_2 x_1 -  \To_2  x_2}
\\
&  \leq \norm{  \To_1 x_1 -  \To_2  x_1}  + \lip(\To_2) \norm{   x_1 -  x_2}  \,.
\end{align*}
From the definitions of $\To_1, \To_2$ we derive    $\norm{  \To_1 x_1 -  \To_2  x_1} \leq   \lip (\Do(\la,\cdot)) \norm{\nabla_x \dis (x,y_1)-\nabla_x \dis (x,y_2)}$. Together with the last displayed equation this gives   \eqref{eq:stab-estimate}.
\end{proof}

Theorem~\ref{thm:stability} is a corollary of Theorem~\ref{thm:stabest} that we formulate for the more general Problem \eqref{eq:min}.

\begin{corollary}[Stability of PnP] \label{cor:stab}
Consider Problem~\eqref{eq:min} with \ref{ass:A1}, \ref{ass:A2}, \ref{ass:A4} and let $\Do(\la,\cdot)$ be contractive. Then $\PnP(\la,\cdot)$ is strongly continuous.
\end{corollary}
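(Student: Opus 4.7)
The plan is to deduce the corollary directly from the quantitative estimate in Theorem~\ref{thm:stabest}, using the equicontinuity assumption \ref{ass:A4} as the sole additional ingredient.

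First I would fix $\la > 0$ and a sequence $(y_k)_{k \in \N}$ in $\Y$ with $y_k \to y$. The hypotheses of Theorem~\ref{thm:stabest} are satisfied, since we assume \ref{ass:A1}, \ref{ass:A2}, \ref{ass:A4} and that $\Do(\la,\cdot)$ is a contraction. Applying \eqref{eq:stab-estimate} to the pair $(y_k, y)$ gives
\begin{equation*}
\norm{\PnP(\la,y_k) - \PnP(\la,y)} \leq \frac{\gamma \lip(\Do(\la,\cdot))}{1 - \lip(\Do(\la,\cdot))} \sup_{x \in \X} \norm{\nabla_x \dis(x,y_k) - \nabla_x \dis(x,y)} \,.
\end{equation*}
Since $\la$ is held fixed and $\Do(\la,\cdot)$ is a contraction, the prefactor on the right-hand side is a finite constant independent of $k$.

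Next I would invoke \ref{ass:A4}, which is precisely the statement that $\norm{y_k - y} \to 0$ forces $\sup_{x \in \X} \norm{\nabla_x \dis(x,y_k) - \nabla_x \dis(x,y)} \to 0$. Combined with the displayed estimate this yields $\norm{\PnP(\la,y_k) - \PnP(\la,y)} \to 0$, establishing strong continuity of $\PnP(\la,\cdot)$ at $y$; since $y$ was arbitrary, strong continuity holds on all of $\Y$.

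There is no real obstacle here — the corollary is essentially a repackaging of Theorem~\ref{thm:stabest}: the quantitative estimate does all the work, and \ref{ass:A4} is tailored to make the data-dependent factor on the right vanish as $y_k \to y$. The only care needed is to note that the Lipschitz-type prefactor depends on $\la$ but not on $y_k$ or $y$, so that the bound is uniform in $k$ once $\la$ is fixed.
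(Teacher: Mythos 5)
Your proposal is correct and follows essentially the same route as the paper: apply the quantitative estimate of Theorem~\ref{thm:stabest} with fixed $\la$ and use the equicontinuity assumption \ref{ass:A4} to make the factor $\sup_{x \in \X} \norm{\nabla_x \dis(x,y_k) - \nabla_x \dis(x,y)}$ vanish. The paper's proof is exactly this two-line argument, so nothing further is needed.
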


\begin{proof}
Let  $y, y_k \in \Y$ with $\norm{y_k - y} \to 0$ as $k \to \infty$.  According to the  equicontinuity of $(\nabla_x \dis (x,\cdot)$, \begin{equation*}
	\sup_{x \in \X} \norm{\nabla_x \dis (x,y)-\nabla_x \dis (x,y_n)} \to 0 \text{ as } n \to \infty
	\,.
\end{equation*}
Thus $\norm{ \PnP(\la, y_k) -   \PnP(\la, y)}  \to 0$ by Theorem~\ref{thm:stabest}.
\end{proof}

For the least squares functional $\dis_{\rm LS}(x,y) = \norm{\Ao x-y}^2/2$, \eqref{eq:stab-estimate} becomes
\begin{equation*}
\norm{ \PnP(\la, y_1) - \PnP(\la, y_2)}  \leq  \frac{\gamma \norm{\Ao}   \lip(\Do(\la,\cdot))}{1- \lip(\Do(\la,\cdot))}  \norm{y_1-y_2} \,,
\end{equation*}
which is a linear stability estimate which tends to infinity as $\la \to 0$.
Further note that if $\id_\X - s \nabla_x \dis(\cdot,y)$ would be a contraction then following  Theorem~\ref{thm:stabest} one shows that $\argmin_x \dis(x,y) = 0$  has a unique and stable solution.
This implies that in the ill-posed case, $\id_\X - s \nabla_x \dis(\cdot,y)$ is  no contraction.

\subsection{Convergence}
\label{sec:convergence}

To establish convergence we start with two Lemmas.

\begin{lemma}\label{lem:bound-fix}
Let $\To \colon \X \to \X$ be non-expansive with  fixed point $a$ and  $(\To_k)_{k \in \N}$ be a sequence of contractions on $\X$ with fixed points $a_k$. Then
\begin{equation*}
\left(\frac{a-\To_k(a)}{1- \lip(\To_k)}\right)_{k \in\N} \text{ bounded } \Rightarrow (\ffix(\To_k))_{k \in  \N} \text{  bounded} \,.
\end{equation*}
\end{lemma}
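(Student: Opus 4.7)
The plan is to rewrite $\|a_k - a\|$ via the fixed point equation $a_k = \To_k(a_k)$ and insert $\To_k(a)$ as an intermediate point so that the contraction constant $\lip(\To_k)$ appears. Concretely, using the triangle inequality and the fact that $\To_k$ is $\lip(\To_k)$-Lipschitz, I will estimate
\begin{equation*}
\|a_k - a\| = \|\To_k(a_k) - a\| \leq \|\To_k(a_k) - \To_k(a)\| + \|\To_k(a) - a\| \leq \lip(\To_k)\,\|a_k-a\| + \|\To_k(a)-a\|.
\end{equation*}
Rearranging, since $\lip(\To_k)<1$, this gives
\begin{equation*}
\|a_k - a\| \leq \frac{\|\To_k(a) - a\|}{1-\lip(\To_k)}.
\end{equation*}

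The hypothesis states exactly that the right-hand side is bounded in $k$. Hence $(\|a_k - a\|)_{k \in \N}$ is bounded, and a final triangle inequality $\|a_k\| \leq \|a\| + \|a_k - a\|$ yields boundedness of $(a_k)_{k \in \N} = (\ffix(\To_k))_{k \in \N}$, which is the conclusion.

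I do not expect any real obstacle: the assumption that $\To$ is non-expansive with fixed point $a$ is used only to provide the reference point $a$ against which the perturbed fixed points $a_k$ are compared, and the contraction property of each $\To_k$ is what allows solving the resulting recursive inequality for $\|a_k - a\|$. The whole argument is three lines; the substance of the lemma is really the quantitative perturbation bound $\|a_k - a\| \leq \|\To_k(a) - a\|/(1-\lip(\To_k))$, which will also be useful later when one wants to localize weak accumulation points of $\PnP(\la_k,y_k)$ near a fixed point of the limiting non-expansive operator.
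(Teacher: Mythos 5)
Your proposal is correct and follows essentially the same route as the paper: the same triangle-inequality decomposition $\|a_k-a\|\le \lip(\To_k)\|a_k-a\|+\|\To_k(a)-a\|$ and the same resulting bound $\|a_k-a\|\le\|\To_k(a)-a\|/(1-\lip(\To_k))$; the paper merely unrolls the inequality into a geometric series instead of rearranging it directly, which is a cosmetic difference.
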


\begin{proof}
Let $M >0 $ with $\norm{a-\To_k(a)}/(1- \lip(\To_k)) \leq  M$. By the triangle inequality, $\norm{a_k -a}  \leq \lip(\To_k) \norm{a_k-a} + \norm{\To_k(a)-a}$ and thus
\begin{equation*}
\norm{a_k -a}
\leq \lip(\To_k)^n \norm{a_k-a}
+ \norm{\To_k(a)-a}  \sum_{i=0}^{n-1} \lip(\To_k)^i  \to  \frac{\norm{\Ao_k(a)-a}}{1-\lip(\Ao_k)} \leq  M \,.
\end{equation*}
Thus  $(a_k)_{k\in \N}$ is bounded by $M + \norm{a}$.
\end{proof}

\begin{lemma}\label{lem:weak-conv}
Let  $\To \colon \X \to \X$ be weakly sequentially continuous and let
$(\To_k)_{k \in \N}$ be a sequence of non-expansive mappings on $\X$ with fixed points $a_k$. If $(a_k)_{k \in \N}$ is contained in some bounded set $B \subseteq \X$ and $(\To_k)_{k \in \N} \to \To$ weakly uniformly on $B$, then  the limit of every weakly convergent subsequence of $(a_k)_{k \in \N}$  is a fixed point of $\To$.
\end{lemma}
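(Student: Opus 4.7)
The plan is to pick any weakly convergent subsequence (which for brevity I relabel as $(a_k)_{k \in \N}$) with weak limit $a^* \in \X$, and verify $\To(a^*) = a^*$ by showing that $\inner{\To(a^*) - a^*}{z} = 0$ for every $z \in \X$.

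The engine of the proof is the fixed-point identity $\To_k(a_k) = a_k$, which allows the decomposition
\begin{equation*}
\inner{\To(a^*) - a^*}{z} = \inner{\To(a^*) - \To(a_k)}{z} + \inner{\To(a_k) - \To_k(a_k)}{z} + \inner{a_k - a^*}{z}.
\end{equation*}
I would then argue that each of the three right-hand-side terms tends to $0$ as $k \to \infty$. The first vanishes because weak sequential continuity of $\To$ upgrades $a_k \rightharpoonup a^*$ to $\To(a_k) \rightharpoonup \To(a^*)$, so testing against $z$ yields a convergent inner product. The third vanishes directly from $a_k \rightharpoonup a^*$. The middle term is where the weak uniform convergence hypothesis enters: since $a_k \in B$,
\begin{equation*}
\abs{\inner{\To(a_k) - \To_k(a_k)}{z}} \leq \sup_{x \in B} \abs{\inner{\To(x) - \To_k(x)}{z}},
\end{equation*}
and applying the one-sided definition of weak uniform convergence on $B$ to both $z$ and $-z$ upgrades it to a two-sided estimate, so this supremum tends to $0$. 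Since $z \in \X$ was arbitrary, we conclude $\To(a^*) = a^*$.

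The main subtlety is matching the right hypothesis to each term: weak sequential continuity (rather than plain continuity) of $\To$ is what one needs for the first term, given that $a_k$ only converges weakly, and the weak uniform convergence, although stated via a one-sided supremum in the notation section, must be symmetrized via $z \mapsto -z$ to control the absolute value in the middle term. I note in passing that non-expansiveness of the $\To_k$ is not actually used in the argument; it presumably appears because, when combined with Lemma~\ref{lem:bound-fix}, it is what makes the hypothesis ``$(a_k)_{k \in \N}$ lies in a bounded $B$'' verifiable in the intended PnP application.
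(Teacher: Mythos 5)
Your proof is correct and uses essentially the same argument as the paper: both rest on the fixed-point identity $\To_k(a_k)=a_k$, the bound $\sup_{x\in B}\abs{\inner{\To_k(x)-\To(x)}{z}}\to 0$ obtained by symmetrizing the weak uniform convergence over $z$ and $-z$, and the weak sequential continuity of $\To$ to pass to the limit, differing only in that you organize these as a three-term telescoping identity while the paper estimates $\inner{a_{\tau(k)}-\To(a_{\tau(k)})}{z}$ directly. Your side remark is also accurate: non-expansiveness of the $\To_k$ is not used in the paper's proof either.
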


\begin{proof}
Let $(a_{\tau(k)})_{k\in \N}$ be a weakly convergent subsequence of $(a_k)_{k \in \N}$ with  limit $a$. Then
\begin{equation*}
\inner{a_{\tau(k)} - \To(a_{\tau(k)})}{z}
 =  \inner{\To_{\tau(k)}(a_{\tau(k)}) - \To(a_{\tau(k)}) }{z}
 \leq \sup_{x \in B } \inner{\To_{\tau(k)}(x) - \To(x)}{z} \,.
\end{equation*}
By the  weak uniform convergence of $\To_k$ the right hand side tends to 0. Together with the weak  sequential  continuity of $\To$ we get  $ \inner{a - \To(a)}{z} \leq 0 $. Considering $-z$ in place of  $z$, we conclude   $\forall z \in \X \colon \inner{a - \To(a) }{z} =  0$ and thus $\To(a) = a$. \end{proof}

We  next establish convergence of PnP. Again we formulate and verify the result for the  more general Problem~\ref{eq:min}.

\begin{theorem}[Convergence] \label{thm:conv}
Consider Problem~\eqref{eq:min}, let \ref{ass:A1}-\ref{ass:A4} hold  and $(\Do(\la,\cdot))_{\la >0}$ be an admissible family  of denoiser, let $y \in E$,  $(\delta_k)_{k \in \N} \in (0,\infty)^\N$  converge to $0$. Then there  exists $(\la_k)_{k \in \N} \in (0,\infty)^\N$  converging to $0$  such that for all $y_k \in \Y$ with $\norm{y-y_k} \leq \delta_k$ the following hold:
\begin{enumerate}[label=(\alph*)]
	\item $(\PnP(\la_k,y_k))_{k\in \N}$ has a weakly convergent subsequence.
	\item  The limit of every weakly convergent subsequence of $(\PnP(\la_k,y_k))_{k\in \N}$ is a solution  of $\argmin_x \dis(x,y)$.
\end{enumerate}
\end{theorem}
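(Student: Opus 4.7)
The approach is to write $\PnP(\la_k,y_k) = \ffix(\To_k)$ with $\To_k \triangleq \Do(\la_k,\cdot)\circ(\id_\X - s\nabla_x\dis(\cdot,y_k))$ and single out the target limiting operator $\To \triangleq \id_\X - s\nabla_x\dis(\cdot,y)$, whose fixed points coincide with the minimizers of $\dis(\cdot,y)$ by Proposition~\ref{prop:exact}. With this identification, part~(a) follows from Lemma~\ref{lem:bound-fix} and part~(b) from Lemma~\ref{lem:weak-conv}, provided a suitable $(\la_k)$ is chosen.

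\textbf{Parameter choice and boundedness.} Set $\eta(\delta) \triangleq \sup\{\sup_{x\in\X}\norm{\nabla_x\dis(x,y)-\nabla_x\dis(x,y')} : y'\in\Y,\ \norm{y-y'}\leq\delta\}$, which tends to zero as $\delta\to 0$ by \ref{ass:A4}, and fix a minimizer $a$ of $\dis(\cdot,y)$ lying in the set of \ref{ass:B4} (the admissibility that the hypothesis $y\in E$ is meant to encode). I would then select $\la_k\to 0$ slowly enough that $\eta(\delta_k)/(1-\lip(\Do(\la_k,\cdot)))\to 0$; this is arrangeable because the rate of $1-\lip(\Do(\la_k,\cdot))\to 0$ is controlled through $\la_k$. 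By Lemma~\ref{lem:grad-step} together with \ref{ass:B1} one has $\lip(\To_k)\leq \lip(\Do(\la_k,\cdot))<1$, and since $\nabla_x\dis(a,y)=0$, inserting the intermediate point $\Do(\la_k,a)$ gives
\begin{equation*}
\norm{a-\To_k(a)} \leq \norm{a-\Do(\la_k,a)} + s\,\lip(\Do(\la_k,\cdot))\,\norm{\nabla_x\dis(a,y_k)-\nabla_x\dis(a,y)}.
\end{equation*}
Dividing by $1-\lip(\To_k) \geq 1-\lip(\Do(\la_k,\cdot))$ and bounding the first summand via \ref{ass:B4} and the second via the parameter choice shows $\norm{a-\To_k(a)}/(1-\lip(\To_k))$ is bounded, so Lemma~\ref{lem:bound-fix} delivers a bounded set $B\subseteq\X$ containing every $\PnP(\la_k,y_k)$. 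The Hilbert-space fact that bounded sequences admit weakly convergent subsequences then yields (a).

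\textbf{Weak uniform convergence and conclusion.} To apply Lemma~\ref{lem:weak-conv} on $B$, I would split
\begin{equation*}
\To_k(x)-\To(x) = \bigl[\Do(\la_k, x-s\nabla_x\dis(x,y_k)) - \Do(\la_k, x-s\nabla_x\dis(x,y))\bigr] + \bigl[\Do(\la_k, x-s\nabla_x\dis(x,y)) - (x-s\nabla_x\dis(x,y))\bigr].
\end{equation*}
The first bracket is bounded in norm by $s\,\lip(\Do(\la_k,\cdot))\,\eta(\delta_k)$ uniformly in $x\in B$, hence produces weakly vanishing inner products by Cauchy–Schwarz; the second bracket has weakly vanishing inner products uniformly on $B$ by \ref{ass:B3} applied to the image set $\{x-s\nabla_x\dis(x,y):x\in B\}$, which is bounded by \ref{ass:A2}. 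Since $\To$ is weakly sequentially continuous by \ref{ass:A3} and non-expansive by Lemma~\ref{lem:grad-step}, Lemma~\ref{lem:weak-conv} identifies every weak subsequential limit of $(\PnP(\la_k,y_k))_{k\in\N}$ as a fixed point of $\To$, which is a minimizer of $\dis(\cdot,y)$ by Proposition~\ref{prop:exact}, giving (b).

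\textbf{Main obstacle.} I expect the delicacy to lie in the coupled parameter selection: $\la_k$ must vanish (to activate \ref{ass:B2}, \ref{ass:B3}) yet slowly enough that $1-\lip(\Do(\la_k,\cdot))$ dominates $\eta(\delta_k)$, and the choice has to be uniform in all admissible $y_k$ with $\norm{y-y_k}\leq\delta_k$, which is exactly what the equicontinuity in \ref{ass:A4} provides via the function $\eta$. A secondary concern is the implicit requirement that the minimizer $a$ used in the boundedness step indeed lies in the set appearing in \ref{ass:B4}, which must be read into the interpretation of $y\in E$.
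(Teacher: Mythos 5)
Your proposal is correct and follows essentially the same route as the paper's proof: the same anchor-point estimate at a minimizer combined with Lemma~\ref{lem:bound-fix} for boundedness, the same two-term splitting of $\To_k - \To$ into $\To_k - \Do(\la_k,\cdot)\circ\To$ and $\Do(\la_k,\cdot)\circ\To - \To$ to get weak uniform convergence on the bounded set, and the same conclusion via Lemma~\ref{lem:weak-conv} and Proposition~\ref{prop:exact}. Your only deviations are minor refinements the paper handles implicitly, namely defining $\eta(\delta)$ as a supremum over the whole noise ball so that $\la_k$ manifestly does not depend on $y_k$, and making explicit that the anchoring minimizer must lie in the set appearing in \ref{ass:B4}.
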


\begin{proof}
Set $\eta_k \triangleq \sup_{x \in \X} \norm{\nabla_x \dis(x,y_k) - \nabla_x \dis (x, y)}$. By the equicontinuity of $(\nabla_x \dis(x,\cdot))_{x \in \X}$, we have $ \eta_k \to 0$ as $k \to \infty$.
Because $\lip(\Do(\la,\cdot)) \to 1$, there exists a sequence $\la_k>0$ with $\la_k  \to 0$ and  $\lip(\Do(\la_k,\cdot)) \leq M/(M+\eta_k)$ for $k$ sufficiently large and fixed $M > 0$.  Then, with $\Do_k \triangleq \Do(\la_k,\cdot)$,
\begin{equation} \label{eq:bound-parameter}
\frac{\gamma \lip(\Do_k)}{1-\lip(\Do_k)} \sup_{x \in \X} \norm{\nabla_x \dis(x,y_k) - \nabla_x \dis (x, y^*)}
\leq \frac{M/(M+\eta_k)}{1-M/(M+\eta_k)} \ga \eta_k = \ga M \,.
\end{equation}
Let $x^*$ be  a minimizer  of   $\dis(\cdot,y^*)$ and write
\begin{align*}
	 \To_k &\triangleq \Do_k \circ ( \id_\X - s \nabla_x \dis(\cdot,y_k))
	 \\
	 \To &\triangleq \id_\X - s \nabla_x \dis (\cdot, y^*) \,.
\end{align*}
Then $x^*$  is a fixed point of  $\To$ and
\begin{align*}
&\norm{\To_k(x^*) - x^*}\\
& =  \norm{\Do_k(x^* - s \nabla_x \dis(x^*,y_k)) - x^*} \\
& \leq \norm{\Do_k  (x^* - s \nabla_x \dis(x^*,y_k)-\Do_k(x^*)}  + \norm{\Do_k(x^*)-x^*} \\
& \leq \ga \lip(\Do_k)  \norm{\nabla_x \dis(x^*, y_k)} + \norm{\Do_k(x^*)-x^*}\\
& = \ga \lip(\Do_k)  \norm{\nabla_x \dis(x^*, y_k) - \nabla_x \dis (x^*, y^*)} + \norm{\Do_k(x^*)-x^*} \\
& \leq \ga \lip(\Do_k)  \sup_{x\in\X} \norm{\nabla_x \dis(x,y_k) - \nabla_x \dis (x, y)}
+ \norm{\Do_k(x^*)-x^*}.
\end{align*}
With \eqref{eq:bound-parameter} and \ref{ass:B4}  we conclude that  $
	(\norm{\To_k(x^*) - x^*}/(1- \lip(\To_k))_{k \in\N}$
is  bounded.  According to Lemma  \ref{lem:bound-fix},  the sequence $(x_k)_{k\in \N}$ of fixed points of $\Do_k \circ (\id_\X -  s \nabla_x \dis(\cdot,y_k))$ is contained  in some  bounded set $B$. In particular, $(x_k)_{k\in \N}$ has a  weakly convergent subsequence.

For any  $z \in \X$ we have
\begin{align*}
\sup_{x \in B}  &\inner{\To_k( x ) - \To(x)}{z} \\
& \quad \leq  \sup_{x \in B} \inner{\To_k( x ) -  \Do_k\To( x ) }{z} +  \sup_{x \in B}  \inner{ \Do_k\To( x )  - \To(x)}{z}
 \\  &\quad \leq  \ga \sup_{x \in B} \norm{\nabla_x \dis(x,y_k)- \nabla_x \dis (x, y)}\norm{z} + \sup_{h \in \To (B)} \inner{\Do_k(h)-h}{z}  \,.
\end{align*}
The latter quantity  converges to zero as $k \to \infty$  and therefore $(\To_k)_{k \in \N}$ converges weakly uniformly to $\To = \id_\X - \nabla_x \dis (\cdot, y^*)$ on $B$. Application of Lemma  \ref{lem:weak-conv}  thus shows that any  weakly convergent subsequence of $(x_k)_{k\in \N}$   is a fixed point of  $\To$  and thus a minimizer  of   $\dis(\cdot,y^*)$.
\end{proof}

\subsection{Characterization of limiting solutions}
\label{sec:char}

Variational regularization has the property that it not only converges to any solution  but to an  $\reg$-minimizing solution of the  inverse problem at hand. Theorem~\ref{thm:char} provides such a characterization for PnP regularization.  A more complete statement that we will verify  below reads as follows:

\begin{theorem}[Limiting solutions of PnP]\label{thm:char2}
Consider the situation of Theorem \ref{thm:conv}, let $B \subseteq \X$ be a bounded set with  $\PnP(\la_k, y_k) \in B$ and additionally assume
\begin{enumerate}[label=(C\arabic*), leftmargin =2.5em]
\item \label{LP:1} $\dis(x,y)= \norm{\Ao x -y }^2/2$ for  $\Ao \colon \X \to \Y$ bounded  linear.
\item \label{LP:2} $\forall x \in B$  $\forall \la > 0 \colon$  $ \Do(\la,\cdot)^{-1}(x) $ is a singleton.
\item \label{LP:3}  $( \Do(\la,\cdot)^{-1}-\id)/\al$ converges weakly uniformly on $B$ to some weakly continuous $\Ho \colon \X \to \X$.
\end{enumerate}
Then any weak accumulation point $x^*$ of $(\PnP(\la_k,y_k))_{k\in \N}$  satisfies
\begin{align}
	\Ao (x^*) &= y \,, \\
	\Ho(x^*)  &\in  \ker(\Ao)^\perp \,.
\end{align}
\end{theorem}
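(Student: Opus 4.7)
The plan is to combine the least-squares fixed-point equation for $\PnP(\la_k,y_k)$ with the inversion hypothesis \ref{LP:2} and the weak-uniform limit \ref{LP:3}, then pass to the limit along the weakly convergent subsequence provided by Theorem~\ref{thm:conv}.

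\textbf{Step 1: $\Ao x^* = y$ from convergence.} By Theorem~\ref{thm:conv} applied to $\dis_{\rm LS}(x,y)=\norm{\Ao x-y}^2/2$, the weak accumulation point $x^*$ minimizes $\dis_{\rm LS}(\cdot,y)$. Since $y \in \ran(\Ao)$ (this is built into the hypothesis $y \in E$ in our setting, as $E$ consists of exact data), $\min \dis_{\rm LS}(\cdot,y) = 0$, so $\Ao x^* = y$.

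\textbf{Step 2: Rewriting the fixed-point equation.} Setting $x_k \triangleq \PnP(\la_k,y_k)$, the fixed-point identity reads $x_k = \Do(\la_k,\cdot)(x_k - s \Ao^*(\Ao x_k - y_k))$. Since $x_k \in B$, assumption \ref{LP:2} lets me apply $\Do(\la_k,\cdot)^{-1}$ and obtain
\begin{equation*}
\frac{\Do(\la_k,\cdot)^{-1}(x_k) - x_k}{\la_k} \;=\; -\frac{s}{\la_k}\, \Ao^*\bigl(\Ao x_k - y_k\bigr).
\end{equation*}
The key observation is that the right-hand side lies in $\ran(\Ao^*) \subseteq \ker(\Ao)^\perp$ for every $k$, and $\ker(\Ao)^\perp$ is a closed subspace, hence weakly closed.

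\textbf{Step 3: Weak limit of the left-hand side.} Let $(x_{k_j})_{j\in\N}$ be a weakly convergent subsequence with limit $x^*$. For any $z \in \X$ I decompose
\begin{equation*}
\left\langle \tfrac{\Do(\la_{k_j},\cdot)^{-1}(x_{k_j}) - x_{k_j}}{\la_{k_j}}, z \right\rangle
= \left\langle \tfrac{\Do(\la_{k_j},\cdot)^{-1} - \id}{\la_{k_j}}(x_{k_j}) - \Ho(x_{k_j}), z\right\rangle + \inner{\Ho(x_{k_j})}{z}.
\end{equation*}
The first term is bounded in absolute value by $\sup_{x\in B} \bigabs{\inner{(\Do(\la_{k_j},\cdot)^{-1}-\id)/\la_{k_j}(x) - \Ho(x)}{z}}$, which vanishes by \ref{LP:3}. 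The second term converges to $\inner{\Ho(x^*)}{z}$ by the weak continuity of $\Ho$ applied to $x_{k_j} \rightharpoonup x^*$. Hence the left-hand side, and therefore also the right-hand side of the identity in Step~2, converges weakly to $\Ho(x^*)$. Since every term of the right-hand side sits in the weakly closed subspace $\ker(\Ao)^\perp$, the weak limit does too, giving $\Ho(x^*) \in \ker(\Ao)^\perp$.

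\textbf{Anticipated obstacle.} The subtle point is that the vector $x_{k_j}$ at which $(\Do(\la_{k_j},\cdot)^{-1}-\id)/\la_{k_j}$ is evaluated is itself varying with $j$. Simple pointwise weak convergence would not suffice; this is precisely why \ref{LP:3} demands weak uniform convergence on $B$ and why the boundedness of $(x_k)_{k\in\N}$ (needed to apply the uniform bound) is built into the hypothesis $\PnP(\la_k,y_k)\in B$. The weak continuity of $\Ho$ then handles the remaining $\Ho(x_{k_j})\rightharpoonup \Ho(x^*)$ step and closes the argument.
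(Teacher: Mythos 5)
Your proposal is correct and follows essentially the same route as the paper: apply the fixed-point identity with \ref{LP:2} to get $\Do(\la_k,\cdot)^{-1}(x_k)-x_k \in \ran(\Ao^*)\subseteq\ker(\Ao)^\perp$, and pass to the limit using the weak uniform convergence on $B$ from \ref{LP:3} together with the weak continuity of $\Ho$. The only cosmetic difference is that you conclude via weak closedness of the subspace $\ker(\Ao)^\perp$, whereas the paper tests directly against $z\in\ker(\Ao)$; these are equivalent.
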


\begin{proof}
Theorem  \ref{thm:conv} states that any  accumulation point $x^*$  of $(\PnP(\la_k,y_k))_{k\in \N}$ satisfies $\Ao (x^*) = y$.  In order to show Theorem~\ref{thm:char2} we will verify that these limits additionally satisfy  $\Ho(x^*)  \in  \ker(\Ao)^\perp$.
Without loss  of generality assume that $x_k \triangleq \PnP(\la_k,y_k)$ is weakly  convergent with limit $x^*$. For $z \in \X$ set $a_k(z) \triangleq \inner{(\Do(\la_k,\cdot)^{-1}(x_k) - x_k)/\la_k}{z}  $. Then
\begin{align*}
&\abs{a_k(z)-\inner{\Ho(x^*)}{z}}
\\ &
\leq \migabs{\miginner{\frac{\Do(\la_k,\cdot)^{-1}(x_k) - x_k}{\la_k}- \Ho(x_k)}{z} } + \abs{\inner{\Ho(x_k)-\Ho(x^*)}{z}}
\\&
\leq \sup_{x \in B} \migabs{\miginner{\frac{\Do(\la_k,\cdot)^{-1}(x) - x}{\la_k}- \Ho(x)}{z}} + \abs{\inner{\Ho(x_k)-\Ho(x^*)}{z}} \,.
\end{align*}
By \ref{LP:3} and the weak sequentially continuity of $\Ho$ this shows   $\lim_{k \to \infty} a_k(z) = \inner{\Ho(x^*)}{z}$ for all $z \in \X$. Further,
\begin{align*}
\Do(\la_k,\cdot)^{-1}(x_k) - x_k
&=
(\id_\X - s \nabla_x \dis(\cdot,y_k) ) (x_k) - x_k
\\&= - \ga \Ao^*(\Ao x_k - y_k) \in \im(\Ao^*) = \ker(\Ao)^\bot \,.
\end{align*}
Hence,  $a_k(z) =0$  and $\inner{\Ho(x^*)}{z}= \lim_{k \to \infty} a_k(z) = 0 $  for all $ z \in \ker(\Ao)$. Therefore $\Ho(x^*) \in  \ker(\Ao)^\perp$.
\end{proof}

\begin{example}[Relation to $\reg$-minimizing solutions]
Let $\reg \in \Gamma_0( \X)$ be differentiable and $L(y) \triangleq  \set{x \in \X \mid \Ao x =y}$.  Minimizers of  $\reg|_{L(y)}$ are refered to as  $\reg$-minimizing solution of $\Ao x = y$. The first order optimality condition in this case reads    $\nabla \reg (x) \in  \ker(\Ao)^\perp$. In terms of proximity operators,
\[
	\nabla \reg(\hat x)
	= (\prox_\reg^{-1} - \id)(\hat x)
	=  \frac{\prox_{\la \reg}^{-1} - \id}{\la}( x^\ast) \,.
\]
Thus Theorem~\ref{thm:char2} can be applied with $\Do(\la,\cdot) = \prox_{\la \reg}$  and  $\Ho = \nabla \reg$ and  one recovers that accumulation points of $(\PnP(\la_k,y_k))_{k\in \N}$ are $\reg$-minimizing solution of $\Ao x = y$.
\end{example}

\section{Discussion and outlook}

In this  work we extended the PnP  framework to a convergent regularization method for solving ill-posed image reconstruction problems.
In particular, we showed that if the noise tends to zero, PnP fixed points converge to exact solutions of $\argmin_x \dis(x,y)$.  Our results generalize many other regularization techniques, for example, variational regularization  and linear filter methods for compact operators using SVD.   Our theory probably also contains  many other cases (such as non-linear filter methods), where the convergence is barley analyzed. Part of future research is to make use of this general tool to establish a convergence analysis for non-linear filter methods like \cite{Fr19}.
Further objectives are convergence  with respect to the norm-topology and convergence rates.


\begin{thebibliography}{10}

\bibitem{acar1994analysis}
Robert Acar and Curtis~R Vogel.
\newblock Analysis of bounded variation penalty methods for ill-posed problems.
\newblock {\em Inverse problems}, 10(6):1217, 1994.

\bibitem{Ba77}
J~B Baillon and Georges Haddad.
\newblock {Quelques propri{\'{e}}t{\'{e}}s des op{\'{e}}rateurs
  angle-born{\'{e}}s etn-cycliquement monotones}.
\newblock {\em Israel Journal of Mathematics}, 26:137--150, 1977.

\bibitem{bauschke2010baillon}
Heinz~H Bauschke and Patrick~L Combettes.
\newblock The baillon-haddad theorem revisited.
\newblock {\em Journal of Convex Analysis}, 17(3\&4):781--787, 2010.

\bibitem{bauschke2011convex}
Heinz~H Bauschke, Patrick~L Combettes, et~al.
\newblock {\em Convex analysis and monotone operator theory in Hilbert spaces},
  volume 408.
\newblock Springer, 2011.

\bibitem{chambolle2011first}
Antonin Chambolle and Thomas Pock.
\newblock A first-order primal-dual algorithm for convex problems with
  applications to imaging.
\newblock {\em Journal of mathematical imaging and vision}, 40(1):120--145,
  2011.

\bibitem{combettes2005signal}
Patrick~L Combettes and Val{\'e}rie~R Wajs.
\newblock Signal recovery by proximal forward-backward splitting.
\newblock {\em Multiscale modeling \& simulation}, 4(4):1168--1200, 2005.

\bibitem{dabov2007image}
Kostadin Dabov, Alessandro Foi, Vladimir Katkovnik, and Karen Egiazarian.
\newblock Image denoising by sparse 3-d transform-domain collaborative
  filtering.
\newblock {\em IEEE Transactions on image processing}, 16(8):2080--2095, 2007.

\bibitem{daubechies2004iterative}
Ingrid Daubechies, Michel Defrise, and Christine De~Mol.
\newblock An iterative thresholding algorithm for linear inverse problems with
  a sparsity constraint.
\newblock {\em Communications on Pure and Applied Mathematics},
  57(11):1413--1457, 2004.

\bibitem{engl1996regularization}
Heinz~Werner Engl, Martin Hanke, and Andreas Neubauer.
\newblock {\em Regularization of inverse problems}, volume 375.
\newblock Springer Science \& Business Media, 1996.

\bibitem{Fr19}
J{\"{u}}rgen Frikel and Markus Haltmeier.
\newblock {Sparse regularization of inverse problems by operator-adapted frame
  thresholding}.
\newblock {\em Trends in Mathematics}, pages 163--178, 2020.

\bibitem{glowinski2014alternating}
Roland Glowinski.
\newblock On alternating direction methods of multipliers: a historical
  perspective.
\newblock {\em Modeling, simulation and optimization for science and
  technology}, pages 59--82, 2014.

\bibitem{li2020nett}
Housen Li, Johannes Schwab, Stephan Antholzer, and Markus Haltmeier.
\newblock Nett: Solving inverse problems with deep neural networks.
\newblock {\em Inverse Problems}, 36(6):065005, 2020.

\bibitem{lunz2018adversarial}
Sebastian Lunz, Ozan {\"O}ktem, and Carola-Bibiane Sch{\"o}nlieb.
\newblock Adversarial regularizers in inverse problems.
\newblock {\em Advances in neural information processing systems}, 31, 2018.

\bibitem{Me17}
Tim Meinhardt, Michael Moeller, Caner Hazirbas, and Daniel Cremers.
\newblock {Learning Proximal Operators: Using Denoising Networks for
  Regularizing Inverse Imaging Problems}.
\newblock {\em Proceedings of the IEEE International Conference on Computer
  Vision}, 2017-Octob(October):1799--1808, 2017.

\bibitem{Mo65}
Jean-Jacques Moreau.
\newblock {Proximit{\'{e}} et dualit{\'{e}} dans un espace hilbertien}.
\newblock 93:273--299, 1965.

\bibitem{Na68}
Sam~B. Nadler.
\newblock {Sequences of contractions and fixed points}.
\newblock {\em Pacific Journal of Mathematics}, 27(3):579--585, 1968.

\bibitem{Ga21}
Pravin Nair, Ruturaj~G. Gavaskar, and Kunal~Narayan Chaudhury.
\newblock {Fixed-Point and Objective Convergence of Plug-and-Play Algorithms}.
\newblock {\em IEEE Transactions on Computational Imaging}, 7(1):337--348,
  2021.

\bibitem{obmann2021augmented}
Daniel Obmann, Linh Nguyen, Johannes Schwab, and Markus Haltmeier.
\newblock Augmented nett regularization of inverse problems.
\newblock {\em Journal of Physics Communications}, 5(10):105002, 2021.

\bibitem{Ry19}
Ernest~K. Ryu, Jialin Liu, Sicheng Wang, Xiaohan Chen, Zhangyang Wang, and
  Wotao Yin.
\newblock {Plug-and-play methods provably converge with properly trained
  denoisers}. 36th International Conference on Machine Learning, ICML 2019, pp. 9764–9775, 2019.

\bibitem{scherzer2009variational}
Otmar Scherzer, Markus Grasmair, Harald Grossauer, Markus Haltmeier, and Frank Lenzen.
\newblock {\em Variational methods in imaging}.
\newblock Springer, 2009.

\bibitem{Sr15}
Suhas Sreehari, Singanallur~V. Venkatakrishnan, Brendt Wohlberg, Gregery~T.
  Buzzard, Lawrence~F. Drummy, Jeffrey~P. Simmons, and Charles~A. Bouman.
\newblock {Plug-and-Play Priors for Bright Field Electron Tomography and Sparse
  Interpolation}.
\newblock {\em IEEE Transactions on Computational Imaging}, 2016.

\bibitem{Wo19}
Yu~Sun, Brendt Wohlberg, and Ulugbek~S. Kamilov.
\newblock {An Online Plug-and-Play Algorithm for Regularized Image
  Reconstruction}.
\newblock {\em IEEE Transactions on Computational Imaging}, 5(3):395--408,
  2019.

\bibitem{Ve13}
Singanallur~V. Venkatakrishnan, Charles~A. Bouman, and Brendt Wohlberg.
\newblock {Plug-and-Play Priors for Model Based Reconstruction}. 
\newblock {\em 2013 IEEE Global Conference on Signal and Information Processing}, 945--948, 2013.

\bibitem{xie2012image}
Junyuan Xie, Linli Xu, and Enhong Chen.
\newblock Image denoising and inpainting with deep neural networks.
\newblock {\em Advances in neural information processing systems}, 25, 2012.

\end{thebibliography}
\end{document}